\newcommand*{\rom}[1]{\expandafter\@slowromancap\romannumeral #1@}
\newtheorem{lemma}{Lemma}
\newtheorem{theorem}{Theorem}
\newtheorem{remark}{Remark}
\newtheorem{definition}{Definition}
\newtheorem{question}{Question}
\newenvironment{breakablealgorithm}
  {
   \begin{center}
     \refstepcounter{algorithm}
     \hrule height.8pt depth0pt \kern2pt
     \renewcommand{\caption}[2][\relax]{
       {\raggedright\textbf{\ALG@name~\thealgorithm} ##2\par}%
       \ifx\relax##1\relax 
         \addcontentsline{loa}{algorithm}{\protect\numberline{\thealgorithm}##2}%
       \else 
         \addcontentsline{loa}{algorithm}{\protect\numberline{\thealgorithm}##1}%
       \fi
       \kern2pt\hrule\kern2pt
     }
  }{
     \kern2pt\hrule\relax
   \end{center}
  }
\title{Deterministic Algorithms to Solve the $(n,k)$-Complete Hidden Subset Sum Problem}
\author{Lixia Luo\thanks{Department of Mathematics and Statistics, Hunan University of Science and Technology, China (luolixia@hnust.edu.cn)}, Changheng Li \thanks{EEMCS department, Delft University of Technology, the Netherlands (L.Li-7@tudelft.nl)}, Qiongxiu Li \thanks{Department of Electronic System, Aalborg University, Denmark (qili@es.aau.dk)}}
\begin{document}
\date{}
\maketitle


%
\begin{abstract}
The Hidden Subset Sum Problem (HSSP) is a significant NP-complete problem in number theory and combinatorics, with applications in cryptography and AI privacy. For the $(n,k)$-complete HSSP, where a target multiset must be recovered from its all $k$-subset sums, existing algorithms face limitations due to high complexity or intractability. This paper proposes two deterministic algorithms: a brute-force approach, and a novel method leveraging symmetric polynomials and Vieta’s formulas with $O\left(\sum_{u=1}^n p(u,\leq k)^3+\binom{n}{k}n\right)$ complexity, where $ p(u,\leq k)$ counts the number of partitions of a positive integer $u$ into at most $k$ parts. The latter constructs an $n$-th degree polynomial via Vieta’s formulas, whose roots correspond to the hidden multiset elements. Additionally, the discussion about the homogeneous symmetric polynomial rings is of independent interest.

\end{abstract}
\section{Introduction}

The \textbf{Hidden Subset Sum Problem (HSSP)} is a classic and intriguing problem in additive number theory, additive combinatorics, and combinatorial number theory. Its broad practical applications span various fields such as cryptography  \cite{boyko1998speeding}, medical  radiation oncology \cite{collins2007nonnegative}, AI privacy \cite{li2024privacy,li2024perfect}, and Wi-Fi channel allocation \cite{busson2024channel}. Notably, HSSP has been independently studied under various names and contexts, including as the  \textbf{multiset recovery problem} \cite{fomin2019multiset} and, in some cases, as a special instance of the \textbf{generating set problem} \cite{collins2007nonnegative}. 

\vspace{1em}
\noindent\textbf{Definitions of HSSP} Let $G$ be an Abelian group. The general definition of HSSP defined over $G$ is as follows: 

\textit{Given a multiset $B$ of $G$ consisting of $m$ elements, the goal is to find a finite multiset $X$ of cardinality $n(n \leq m)$ such that every element of $B$ is a subset sum of elements from the original multiset $X$.} 

Unlike ordinary sets, multisets allow elements to have multiplicities, meaning the same element can appear multiple times within the set \cite{blizard1989multiset}. In contrast to the \textbf{ Subset Sum Problem (SSP)}, where given a finite multiset $X$ and a value $b$, the goal is to find a subset of $X$ such that the sum of its elements equals $b$, HSSP hides the multiset $X$ and typically only provides several subset sums. In particular, if the multiset consists of all $k$-subset sums of $G$ (i.e., sums formed from subsets of $G$ containing exactly $k$ elements), it is referred to as an \textbf{$(n,k)$-complete HSSP}, which is the case we focus on in this article. Additionally, HSSP can also be formulated as follows:

\textit{Given an $m$-dimensional vector $\mathbf{B}=(b_1,b_2,\dots,b_m) \in G^m$, find a matrix $\mathbf{A}\in \{0,1\}^{m \times n}$ and an $n$-dimensional vector $\mathbf{X}=(x_1,x_2,\dots,x_n)\in G^n$ such that $\mathbf{A}\mathbf{X} = \mathbf{B}$.} 

Moreover, the underlying group structure can be generalized to a ring or field framework while ensuring that the problem remains well-defined, thereby leading to different variations of HSSP. 

\vspace{1em}
\noindent\textbf{Related works of $(n,k)$-complete HSSP} 
The study of $(n,k)$-complete HSSP has a rich history dating back to 1957 when Moser first published a solution for the $(5,2)$ case and discussed potential multiple solutions for the $(4,2)$ variant \cite{moser1957e1248}. This early work laid the groundwork for subsequent investigations into the multi-solution nature of these problems. In 2019, Fomin summarized and formalized these types of inquiries under the name multiset recovery problems in a review article \cite{fomin2019multiset}, dedicated for memorizing Izhboldin. This class of problems also finds its place among the unsolved challenges in additive number theory, as noted by Guy in his comprehensive compendium, \textit{Unsolved Problems in Number Theory} \cite{guy2004unsolved}.

In 1958, Selfridge and Straus \cite{selfridge1958determination} applied symmetric function theory to prove that the solution to the $(n,2)$-complete HSSP is unique if and only if $n$ is not a power of $2$. For the general case where $k \neq 2$, they provided a sufficient condition for the uniqueness of the solution through a family of Diophantine equations of the form $P(n, k) = 0,$
where $ P(n,k) = \sum_{i=1}^{k} (-1)^{i-1} i^{u-1} \binom{n}{k-i}$ is referred to as the \textbf{Moser polynomials} \cite{fomin2019multiset}. Here, $u$ ranges over $1, 2, \dots, n$. If $(n,k)$ does not satisfy any of these equations, the $(n,k)$-complete HSSP has a unique solution. However, if $(n,k)$ satisfies at least one of these equations, it is unclear whether multiple solutions exist.

In 1962, Gordon, Fraenkel, and Straus \cite{gordon1962determination} proved that for $(n,k)$-complete HSSP defined on torsion-free Abelian groups, there are only finitely many values of $n$ for which multiple solutions exist, given a fixed $k$. They also provided an upper bound on the number of solutions in some cases. These parameter pairs $(n,k)$ that yield multiple solutions are referred to as as \textbf{singular pairs}. Subsequent research by Ewell, Fomin, Izhboldin, and others further investigated the determination of singular pairs and the upper bounds on the number of corresponding solutions, leading to several open questions \cite{ewell1968determination,fomin1995collections,boman1996examples,isomurodov2017set}. Recently, Ciprietti and Glaudo examined the uniqueness of the solution for HSSP when the multiset is a subset sum of all subsets of a given set \cite{ciprietti2023determination}.

\vspace{1em}
\noindent\textbf{Our Contributions}
While the multi-solution nature of the $(n,k)$-complete HSSP has been extensively studied using symmetric function theory, no algorithms have been proposed to solve it exclusively. In fact, deterministic algorithms for solving the $(n,k)$-complete HSSP are crucial for experimentally investigating its multi-solution properties. However, to the best of our knowledge, no deterministic algorithms have been proposed for solving the $(n,k)$-complete HSSP.

 This paper focuses on developing deterministic algorithms for solving the $(n,k)$-complete HSSP. In this article, let $X_{n,k}$ denote the multiset consisting of all $k$-subset sums.  Without loss of generality, we assume $k\leq n/2$ as the sum of all $x_i$'s can be determined from  $X_{n,k}$. Consequently, all $\left(n-k\right)$-subset sums can be obtained by subtracting each $k$-subset sum from the total sum of $x_i$'s. Let $X,X_{n,k}$ form the vectors $\mathbf{X},\mathbf{X}_{n,k}$, respectively. Their relationship can be formulated as $\mathbf{AX}=\mathbf{X}_{n,k}$, where $\mathbf{A}$ is a $\binom{n}{k} \times n$ binary matrix, with each row containing exactly $k$ ones. For simplicity, we consider the $(n,k)$-complete HSSP defined over the field of real numbers $\mathbb{R}$.

Two types of deterministic algorithms are proposed in this work. The first is based on \textbf{brute-force search}. By leveraging \textbf{the ordering relationships of subset sums}, we have reduced the complexity of the direct brute-force search algorithm. The result is formalized in the following theorem:
 
\begin{theorem}\label{thm:bru}
There is a deterministic algorithm to solve the $(n,k)$-complete HSSP defined over $\mathbb{R}$ via brute-force search, with a time complexity of
\begin{equation*}
    O\left(\binom{n}{k}\left(\log \binom{n}{k}+\prod_{i=1}^{k-1}\left(\binom{n-k+i}{i}-i\right)\right)\right).
\end{equation*}
\end{theorem}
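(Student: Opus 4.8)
The plan is to give a search procedure that reconstructs the sorted hidden multiset from the bottom up while exploiting the total order on $\mathbb{R}$, and then to account for its running time in the two pieces that appear inside the bound. Without loss of generality I would assume the hidden values are sorted, $x_1\le x_2\le\cdots\le x_n$, which is harmless because the target is a multiset. The algorithm begins by sorting the $\binom{n}{k}$ observed subset sums; this costs $O(\binom{n}{k}\log\binom{n}{k})$ and is exactly the source of the $\binom{n}{k}\log\binom{n}{k}$ summand. The structural fact driving everything else is that the sum functional is monotone with respect to the element-wise order on $k$-subsets: if $S=\{x_{i_1},\dots,x_{i_k}\}$ and $T=\{x_{j_1},\dots,x_{j_k}\}$ satisfy $i_\ell\le j_\ell$ for all $\ell$ (after sorting the indices), then $\mathrm{sum}(S)\le\mathrm{sum}(T)$. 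In particular the least entry of the sorted list must equal $x_1+\cdots+x_k$, which anchors the reconstruction.

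Building on this anchor I would run the search in $k-1$ stages, at each stage committing to a matching between a block of observed sums and a block of $k$-subsets whose structure is already constrained by the order. At stage $i$ the relevant subsets are those that use the $k-i$ smallest hidden values together with $i$ of the remaining $n-k+i$ values; there are exactly $\binom{n-k+i}{i}$ of them, and the monotonicity of $\mathrm{sum}(\cdot)$ together with the matchings fixed in earlier stages forces $i$ of these sums, leaving at most $\binom{n-k+i}{i}-i$ undetermined matchings to branch on. Consequently the search explores at most $\prod_{i=1}^{k-1}\left(\binom{n-k+i}{i}-i\right)$ branches, each of which is completed to a candidate multiset $(x_1,\dots,x_n)$ by solving the resulting system of linear equations for the hidden values. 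Every completed candidate is verified by regenerating all $\binom{n}{k}$ of its $k$-subset sums, for instance in Gray-code order so that each update is $O(1)$, and comparing them against the target at cost $O(\binom{n}{k})$ per candidate. Multiplying the number of branches by this per-candidate cost produces the $\binom{n}{k}\prod_{i=1}^{k-1}\left(\binom{n-k+i}{i}-i\right)$ summand; adding the initial sorting cost and factoring out $\binom{n}{k}$ gives the stated complexity.

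The main obstacle is establishing the stage-wise branching bound, i.e.\ that the order constraints really do cut the number of admissible continuations at stage $i$ down to $\binom{n-k+i}{i}-i$. This requires proving that, once the $k-i$ smallest values and the sum-orderings from earlier stages are fixed, the monotonicity of $\mathrm{sum}(\cdot)$ pins down $i$ of the relevant subset sums unambiguously (the extremal configurations and the forced chain between them), so that no branch is wasted on those. Two secondary points need care: that the linear system solved inside each branch is consistent and determines the new coordinates uniquely, and that ties among the observed sums, which genuinely occur over $\mathbb{R}$, do not inflate the branching beyond the claimed count. A final bookkeeping detail is arranging the verification to stay within $O(\binom{n}{k})$ per candidate without reintroducing a logarithmic factor, by reusing the single initial sort and checking order-consistency incrementally rather than re-sorting each candidate; getting this accounting tight, rather than the gross enumeration, is where the argument demands the most attention.
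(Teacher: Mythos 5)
Your skeleton---sort once to get the $\binom{n}{k}\log\binom{n}{k}$ term, use the order on $\mathbb{R}$ to confine guesses to windows, solve a small linear system per branch, and verify each completed candidate in $O(\binom{n}{k})$---is the same as the paper's, and your top-level accounting is correct. But the proposal has a genuine gap exactly where the content lies, namely the branching bound. Your stage-$i$ ``block'' consists of all $\binom{n-k+i}{i}$ subsets that use the $k-i$ smallest hidden values, and you propose to branch on matchings of this block into the sorted list, claiming monotonicity forces $i$ of them. Taken literally this does not give a factor of $\binom{n-k+i}{i}-i$: already at stage $i=1$ the block is the increasing chain $x_1+\cdots+x_{k-1}+x_j$, $j=k,\dots,n$, of which only the first two members are pinned (they are $b_1$ and $b_2$); the remaining members can interleave arbitrarily with all the other subset sums, so the number of admissible matchings of the whole block is a binomial-type count in $\binom{n}{k}$, not $n-k$. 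The paper avoids this by branching on a \emph{single} sum per stage and never matching blocks: it guesses only the $k+1$ sums of the $k$-subsets of $\{x_1,\dots,x_{k+1}\}$ (the rows of the matrix $\mathbf{A}_{k+1}$ with zero anti-diagonal). For the guess $b'_{i+2}$, the sum omitting $x_{k-i}$, the window is bounded below by position $i+2$ (since $b'_1\le b'_2\le\cdots\le b'_{i+2}$ occupy distinct positions) and above by position $\binom{n-k+i}{i}+1$, obtained by counting the $k$-subsets that fail to elementwise dominate the one in question; this yields exactly $\binom{n-k+i}{i}-i$ choices per stage, and the product over $i=1,\dots,k-1$ gives the bound. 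You correctly flag this step as the main obstacle, but the block-matching route you sketch would not close it.

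There is a second, related hole: completion of a branch to a full candidate. In the one-guess-per-stage reading, the $k-1$ branch decisions pin only $k+1$ sums, so the linear system determines only $x_1,\dots,x_{k+1}$; your proposal never says how $x_{k+2},\dots,x_n$ are produced, yet your verification step presupposes a candidate $(x_1,\dots,x_n)$. The paper closes this with a forced greedy step that incurs no extra branching: after deleting the $k+1$ guessed sums from the sorted list, the smallest remaining observed sum must equal $x_1+\cdots+x_{k-1}+x_{k+2}$, because every $k$-subset not contained in $\{x_1,\dots,x_{k+1}\}$ contains some $x_j$ with $j\ge k+2$ and hence has sum at least that value; this determines $x_{k+2}$, its induced sums are checked against and removed from the remaining list (rejecting the branch on any mismatch), and the same argument determines $x_{k+3},\dots,x_n$ in turn. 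Interleaving completion with verification in this way, and using the explicit inverse $\mathbf{A}_{k+1}^{-1}$ rather than Gaussian elimination, keeps the per-branch cost at $O(k^2+\binom{n}{k})$, so that the product of window sizes multiplies only against $\binom{n}{k}$, which is what the stated complexity requires.
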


The second algorithm is based on \textbf{symmetric function theory} and \textbf{Vieta's formulas}. First, the elementary symmetric polynomials of all elements in $X$ are computed from all $k$-subset sums of $X$. Then, $X$ is recovered by finding the roots of a univariate polynomial equation constructed using Vieta's formulas. Moreover, \textbf{Newton's identities} are used to reduce computation.
The result is summarized as follows:
 
\begin{theorem}\label{thm:vie}
For an $(n,k)$-complete HSSP defined over $\mathbb{R}$, if $\sum_{i=1}^{u}(-1)^{i-1}(i -1)!S(u,i)\binom{n-i}{k-i}\ne 0$ for $u=1,2,\dots,n$, there is a deterministic algorithm, called \textbf{symmetric subset sum recovery algorithm}, to solve it with a time complexity of
$$O\left(\sum_{u=1}^n p(u,\leq k) ^3+\binom{n}{k}n\right),$$ where $p(u,\leq k) $ denotes the partition function,  which counts the number of partitions of a positive integer $u$ into at most $k$ parts.
\end{theorem}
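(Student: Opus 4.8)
The plan is to reduce the recovery of the multiset $X$ to the computation of its elementary symmetric polynomials $e_1(X),\dots,e_n(X)$: once these are known, the monic polynomial $f(t)=\prod_{i=1}^n(t-x_i)=t^n-e_1t^{n-1}+\cdots+(-1)^ne_n$ is determined by Vieta's formulas, and its $n$ real roots are exactly the elements of $X$. Equivalently, by Newton's identities it suffices to recover the power sums $p_u(X)=\sum_{i=1}^n x_i^u$ for $u=1,\dots,n$, which I will do one degree at a time. The only data available are the $k$-subset sums $y_S=\sum_{i\in S}x_i$; from them one can form any symmetric function of the multiset $\{y_S\}_{|S|=k}$, and the basic quantities I will use are the power sums $P_u=\sum_{|S|=k}y_S^{\,u}$. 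Computing $P_1,\dots,P_n$ from the $\binom nk$ subset sums costs $O\!\left(\binom nk n\right)$, which accounts for the second term in the complexity bound (and absorbs the cost of forming $f$ and reading the input).

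The key structural step is to expand $P_u$ as a symmetric function of $x_1,\dots,x_n$. Writing $y_S^{\,u}=\big(\sum_{i\in S}x_i\big)^u$ with the multinomial theorem and summing over all $k$-subsets gives
\begin{equation*}
P_u=\sum_{\substack{\lambda\vdash u\\ \ell(\lambda)\le k}}\binom{n-\ell(\lambda)}{k-\ell(\lambda)}\binom{u}{\lambda}\,m_\lambda(x),
\end{equation*}
where $m_\lambda$ is the monomial symmetric polynomial, $\binom{u}{\lambda}$ the multinomial coefficient, and $\ell(\lambda)$ the number of parts: only partitions with at most $k$ parts occur, because each monomial inherits its support from a single $k$-subset. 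This is precisely the homogeneous symmetric polynomial ring observation advertised in the abstract — every degree-$u$ symmetric function produced by $k$-subset sums lies in the $p(u,\le k)$-dimensional span of $\{m_\lambda:\lambda\vdash u,\ \ell(\lambda)\le k\}$. Re-expressing $P_u$ in the power-sum basis yields $P_u=\sum_{\mu\vdash u,\ \ell(\mu)\le k}a_\mu\,p_\mu(x)$, and since every $p_\mu$ with $\ell(\mu)\ge 2$ is a product of the already-known $p_1,\dots,p_{u-1}$, the equation is triangular and determines $p_u$ through the single coefficient $a_{(u)}$ of $p_{(u)}=p_u$:
\begin{equation*}
p_u=\frac{1}{a_{(u)}}\Big(P_u-\!\!\sum_{\substack{\mu\vdash u\\ \ell(\mu)\ge 2}}\!\!a_\mu\,p_\mu(x)\Big),\qquad a_{(u)}\neq 0 .
\end{equation*}
Carrying this out over the $p(u,\le k)$-dimensional degree-$u$ space — computing the change-of-basis coefficients $a_\mu$ and solving the associated linear system by Gaussian elimination — costs $O\!\left(p(u,\le k)^3\right)$ per degree, giving the first term $O\!\left(\sum_{u=1}^n p(u,\le k)^3\right)$; the nonsingularity of the system reduces, by triangularity, to $a_{(u)}\neq 0$ for every $u$.

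The heart of the proof, and the main obstacle, is to identify the pivot $a_{(u)}$ explicitly and show it equals $\sum_{i=1}^{u}(-1)^{i-1}(i-1)!\,S(u,i)\binom{n-i}{k-i}$. I would obtain this by refining the multinomial expansion into a sum over the set partitions $\pi$ of $[u]$ induced by the fibers of a map $f:[u]\to[n]$: grouping by the kernel of $f$ gives $P_u=\sum_{\pi}\binom{n-|\pi|}{k-|\pi|}\,\tilde m_{\lambda(\pi)}(x)$ in the augmented monomial basis, where $\lambda(\pi)$ records the block sizes of $\pi$. The coefficient of $p_u$ is then extracted through the Möbius function of the partition lattice, $\mu(\pi,\hat 1)=(-1)^{|\pi|-1}(|\pi|-1)!$, so that $a_{(u)}=\sum_{\pi\vdash[u]}(-1)^{|\pi|-1}(|\pi|-1)!\binom{n-|\pi|}{k-|\pi|}$; grouping the partitions by their number of blocks $i$, of which there are $S(u,i)$, produces exactly the stated expression. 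The hypothesis $a_{(u)}\neq 0$ for all $u$ is therefore precisely what guarantees each triangular solve is well-posed, completing correctness. I expect the delicate points to be justifying that $P_u$ never involves partitions with more than $k$ parts (so that the working space has dimension $p(u,\le k)$ and the $m$-to-$p$ change of basis stays within $\ell(\mu)\le k$) and handling the partition-lattice Möbius computation cleanly; the remaining complexity bookkeeping — power sums in $O\!\left(\binom nk n\right)$, per-degree linear algebra in $O\!\left(p(u,\le k)^3\right)$, and Newton's identities in negligible $O(n^2)$ — is then routine.
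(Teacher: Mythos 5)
Your proposal is correct, and at the level of the algorithm it coincides with the paper's: compute the power sums $S_u$ of the $k$-subset sums in $O\bigl(\tbinom{n}{k}n\bigr)$, recover the power sums of $X$ degree by degree under the stated nonvanishing hypothesis, pass to the elementary symmetric polynomials via Newton's identities, and read off $X$ as the roots of the Vieta polynomial, with the same per-degree $O\bigl(p(u,\le k)^3\bigr)$ budget. Where you genuinely differ is in the key lemma identifying the nonvanishing condition. The paper sets up, for each $u$, a full $p(u,\le k)\times p(u,\le k)$ linear system whose unknowns are the monomial-type symmetric functions $P_p$ and whose right-hand side is $(S_u,P_1P_{u-1},\dots,P_1^u)$, and then computes the determinant of that matrix (its Theorem \ref{thm:det}) by a Laplace expansion along the first row, a recursion for the cofactors solved through a Stirling-number matrix identity, and a Cramer-rule lemma (Lemma \ref{matrixlem}). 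You instead re-expand $S_u$ directly in the power-sum basis, observe that every $p_\mu$ with $\ell(\mu)\ge 2$ is a product of already-known lower-degree power sums, so that the system collapses to a single unknown $p_u$ with pivot $a_{(u)}$, and evaluate that pivot by M\"obius inversion on the partition lattice, $\mu(\pi,\hat 1)=(-1)^{|\pi|-1}(|\pi|-1)!$, grouping set partitions by block count to obtain exactly $\sum_{i=1}^{u}(-1)^{i-1}(i-1)!\,S(u,i)\tbinom{n-i}{k-i}$. Your route is shorter and conceptually sharper---it makes transparent why solvability at each degree hinges on one scalar---and the two delicate points you flag do resolve as you expect: coarsening a set partition only decreases the number of blocks, so the monomial-to-power-sum change of basis stays inside the span indexed by partitions of length at most $k$, and the M\"obius value you quote is the standard one. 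What the paper's heavier computation buys in exchange is the values of all the symmetric functions $P_p$ (not only $p_u$), an explicit matrix that the algorithm inverts as stated, and the determinant formula that ties the failure condition to the Moser polynomials from the prior literature on singular pairs. The only bookkeeping gap on your side is that you never cost the final root-finding step; it is $O(n^3)$ via the companion matrix, which is dominated by $\tbinom{n}{k}n$ since $k\ge 2$, so the claimed bound is unaffected.
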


\vspace{1em}
\noindent\textbf{Related works of HSSP} 
 HSSP has appeared under various names across different research areas. Besides being referred to as the multiset recovery problem in studies on the multi-solution nature of $(n,k)$-complete HSSP, it is also a special instance of the \textbf{generating set problem} \cite{collins2007nonnegative} in combinatorial number theory. The generating set problem has been studied in various contexts, with one notable approach by Collins et al. proving its NP-completeness \cite{collins2007nonnegative}. Some greedy heuristic algorithms have been proposed to address it \cite{collins2007nonnegative, lozano2016genetic}. More recent works have explored connections between HSSP and graph theory, including Busson et al.'s transformation of the Wi-Fi access point channel allocation problem into a 1-extendable partition problem \cite{busson2024channel}, which is equivalent to the generating set problem.

In cryptography, the HSSP over the ring of residues modulo $M$ has been applied to public-key cryptosystems, improving the efficiency of key generation through the BPV generator \cite{boyko1998speeding}. Subsequently, Nguyen and Stern (1999) proposed the NS algorithm, based on orthogonal lattice attacks, to solve these instances of HSSP \cite{nguyen1999hardness}. Gini and Coron followed their work and enhanced the NS algorithm, obtaining a polynomial-time probabilistic version \cite{coron2020polynomial, coron2021provably, gini2022hardness}. However, these methods rely on favorable properties of the matrix $\mathbf{A}$ and vector $\mathbf{B}$, making them less effective for solving the $(n,k)$-complete HSSP.

Recent studies in AI privacy have linked HSSP to federated learning \cite{li2024privacy, li2024perfect}. While techniques like differential privacy, homomorphic encryption, and secure multi-party computation protect intermediate data \cite{yin2021comprehensive}, few address theoretical privacy analysis from the shared updates. For example, \cite{li2024privacy} mapped the input data of the federated K-means clustering algorithm to a hidden multiset in HSSP, formulating intermediate results as subset sums. It transforms the input recovery attack into an instance of HSSP, connecting AI privacy with HSSP.

Unlike the aforementioned heuristic and probabilistic approaches, our work focuses on developing deterministic algorithms for solving the $(n,k)$-complete HSSP, offering a more structured and reliable solution framework.

\vspace{1em}
\noindent\textbf{Organization}
The structure of the paper is as follows. Section \ref{sec:bac} provides some necessary background. In Section \ref{sec:bru}, we present an optimized version of the brute-force search algorithm for solving the $(n,k)$-complete HSSP. Section \ref{sec:vie} discusses the proposed algorithm based on symmetric polynomials theory and Vieta's formulas in detail. Conclusion and future work are outlined in Section \ref{sec:fur}.

\section{Background}\label{sec:bac}

\noindent\textbf{Symmetric polynomials and Vieta's formulas}

\begin{definition}\cite{lang2012algebra}
    Let $x_1,x_2,\dots,x_n$ be algebraically independent elements over $\mathbb{R}$ and $S_n$ be the symmetric group on $n$ letters. A polynomial $f(x_1,x_2,\dots,x_n)$ in $\mathbb{R}[x_1,\dots,x_n]$ is called \textbf{symmetric} if 
$$f(x_{\sigma(1)},x_{\sigma(2)},\dots,x_{\sigma(n)})=f(x_1,x_2,\dots,x_n)$$
holds for all $\sigma \in S_n$.
\end{definition}

Consider the polynomial $f(x):=\prod_{i=1}^n(x-x_i)=x^n+\sum_{i=n-1}^{0}(-1)^{n-i}e_{n-i}x^i$. By Vieta's formulas\cite{lang2012algebra}, the coefficients $e_i$'s satisfy:
$$\begin{aligned}
e_1&:=x_1+x_2+\cdots+x_n, \\
e_2&:=x_1x_2+x_1x_3+\cdots+x_{n-1}x_{n}, \\
 e_3&:=x_1x_2x_3+x_1x_2x_4+\cdots+x_{n-2}x_{n-1}x_n,\\
 &\cdots\\
e_n&:=x_1x_2\cdots x_n.
\end{aligned}
$$
This implies that the coefficients $e_i$'s can be expressed as symmetric polynomials in  $x_1,x_2,\dots,x_n$, known as elementary symmetric polynomials.

\vspace{1em}
\noindent\textbf{Partitions}
\begin{definition}\cite{andrews2004integer}
    A \textbf{partition} of positive integer $u$, also called an integer partition, is a way of expressing $u$ as a sum of positive integers. These positive integers are referred to as the parts of the partition. 
\end{definition}
Throughout this work, partitions will be represented as arrays of their parts, listed in ascending order. The number of parts in a partition $p$ will be referred to as its length, denoted by $n(p)$.

\begin{definition}\cite{andrews2004integer}
    The \textbf{partition function} $p(u)$ is defined as a function to count the number of all partitions of a positive integer $u$.
\end{definition}

\begin{definition}
    Define $p(u,\leq k)$ to be the number of all partitions of a positive integer $u$ into at most $k$ parts.  
\end{definition}

\vspace{1em}
\noindent\textbf{Stirling partition number}
\begin{definition}\cite{stanley2011enumerative}
   A \textbf{Stirling number of the second kind} or \textbf{Stirling partition number} $S(u,i)$ is defined to be the number of ways to partition a set of $n$ objects into $k$ non-empty subsets.
\end{definition}
The Stirling partition number can be calculated using the following explicit formula:
$$S(u,i)=\frac{1}{i!}\sum_{j=0}^i(-1)^{i-j}
\binom{i}{j}j^u .$$
Here are some examples of Stirling partition numbers:
$$S(u,u)=1,S(u,1)=1,S(u,2)=2^{u-1}-1,S(u,3)=\frac{1}{2}(3^{u-1}-2^u+1)$$
$$S(u,4)=\frac{1}{3!}(4^{u-1}-3^u+3\cdot2^{u-1}-1), S(u,5)=\frac{1}{4!}(5^{u-1} - 4\cdot4^{u-1} + 6\cdot3^{u-1} - 4\cdot2^{u-1} + 1). $$
Notice that $S(u,i)=0$ for any $u<i$.

\section{Brute-force search}\label{sec:bru}
Brute-force search, also known as exhaustive search, is a general problem-solving technique that systematically examines all possible candidates. For the 
$(n,k)$-complete HSSP, we know that each sum is the sum of $k$ elements from $X$, but we do not know which $k$ elements contribute to each sum. If we simply traverse all possible subsets of $X$ and check the subset sums, such a brute-force approach can be extremely time-consuming.

Hence, we might as well fix an invertible binary matrix $\mathbf{A}_n$ whose rows each contain exactly $k$ ones, and select $n$ possible subset sums from the total $\binom{n}{k}$ subset sums to identify the candidates of $X$. There are totally $\binom{\binom{n}{k}}{n}$ possible combinations. Afterward, check the unselected subset sums to approve or reject the candidates of $X$.  It is known that the time complexity of solving the system of linear equations is $O(n^3)$ \cite{trefethen1997numerical} and the complexity of checking the rest sums is $O(\binom{n}{k}-n)$. Thus, the total time complexity is $O(\binom{\binom{n}{k}}{n}(n^3+\binom{n}{k}-n))$.

\vspace{1em}
\noindent\textit{Proof of Theorem \ref{thm:bru}}
Since $\mathbb{R}$ is an ordered field, the ordering relations can be used to reduce the number of possible traversals. The steps of the optimal brute-force search algorithm can be described as follows.
\begin{enumerate}
    \item \textbf{Initialization and sorting:} 
    \begin{itemize}
        \item 
    Assume $x_1\leq x_2 \leq \cdots \leq x_n$. Define the square matrix $\mathbf{A}_{k+1}$ of order $k+1$, where all elements are  $1$, except for the anti-diagonal elements, which are 0.
        \item
     Sort all subset sums such that $b_1\leq b_2\leq \cdots \leq b_m.$
    \end{itemize}
    \item \textbf{Constructing matrix equation:} 
    
    Consider  $$\mathbf{A}_{k+1}\mathbf{X}_{k+1}=\mathbf{B}_{k+1},$$ where $\mathbf{B}_{k+1}=(b'_1,b'_2,\dots,b'_{k+1})$. We have $b'_1=b_1$ and $b'_2=b_2$ since $\mathbf{X}_{k+1}$ is in an ascending order. For the remaining elements, we have $$b'_3\in \{b_3,b_4,\dots,b_{(n-k+2)}\},b'_3\leq b'_4 \in \{b_4,b_5,\dots,b_{\binom{n-k+2}{2}+1}\},\dots,$$  $$ b'_{i-1} \leq b'_i\in \{b_i,\dots,b_{\binom{n-(k-i+2)}{i-2}+1}\},\dots,b'_{k}\leq b'_{k+1}\in\{b_{k+1},\dots,b_{\binom{n-1}{k-1}+1}\}.$$ Hence, for $i\geq 3$, we have $\binom{n-(k-i+2)}{i-2}-i+2$  choices for $b'_{i}$ in the worst case.

    \item \textbf{Traversing and solving:} 
    
    Traverse all possible $\mathbf{B}_{k+1}$ in step 2 and solve $\mathbf{A}_{k+1}\mathbf{X}_{k+1}=\mathbf{B}_{k+1}$ to get $\mathbf{X}_{k+1}$. 
    
    Now, scratching out the $b'_i$ for $i=1,2,\dots,k+1$, the smallest subset sum $b'_{k+2}$ of $\left\{b_1,b_2,\dots,b_{m}\right\}\setminus\left\{b'_1,b'_2,\cdots,b'_{k+1}\right\}$  is equal to $x_1+x_2+\cdots+x_{k-1}+x_{k+2}$, then we could get $x_{k+2}$. Generate subset sums using $x_1,x_2,\dots,x_{k+2}$. If these sums belong to  $\left\{b_1,b_2,\dots,b_{m}\right\}\setminus\left\{b'_1,b'_2,\cdots,b'_{k+1}\right\}$, scratch them out; otherwise, reject these $x_i$'s and turn to next possible $\mathbf{B}_{k+1}$. Again, the smallest subset sum of residual $b_i$'s is equal to $x_1+x_2+\cdots+x_{k-1}+x_{k+3}$. Repeat this process until all $x_i$'s are obtained.
\end{enumerate}

In order to  solve the matrix equation $\mathbf{A}_{k+1}\mathbf{X}_{k+1}=\mathbf{B}_{k+1}$ with reduced complexity, we show that  the inverse matrix of $\mathbf{A}_{k+1}$ could be calculated directly.

\begin{lemma}
    For the matrix $\mathbf{A}_{k+1}=(a_{ij})$ where $$a_{ij}=1-\delta_{i,k+2-j}=\begin{cases}
        1 & i+j\ne k+2\\
        0 & i+j=k+2
    \end{cases},$$ the inverse of $\mathbf{A}_{k+1}$ is 
    $\mathbf{A}_{k+1}^{-1}=(b_{ij})$ where $$b_{ij}=\begin{cases}
        \frac{1}{k} & i+j\ne k+2\\
        -\frac{k-1}{k} & i+j=k+2
    \end{cases}.$$
\end{lemma}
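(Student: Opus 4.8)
The plan is to recognize that $\mathbf{A}_{k+1}$ has a very rigid structure: it is the all-ones matrix with the anti-diagonal zeroed out. Writing $\mathbf{J}$ for the $(k+1)\times(k+1)$ all-ones matrix and $\mathbf{P}$ for the exchange (anti-diagonal permutation) matrix whose only nonzero entries are $1$'s in the positions $i+j=k+2$, the definition of $a_{ij}$ says exactly $\mathbf{A}_{k+1}=\mathbf{J}-\mathbf{P}$. The proposed inverse is likewise $\frac{1}{k}\mathbf{J}-\mathbf{P}$, since its off-anti-diagonal entries equal $\frac1k$ and its anti-diagonal entries equal $\frac1k-1=-\frac{k-1}{k}$, matching $b_{ij}$. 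So the lemma reduces to verifying a single matrix identity expressed through $\mathbf{J}$ and $\mathbf{P}$.

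First I would record the three structural identities these matrices satisfy: $\mathbf{J}^2=(k+1)\mathbf{J}$ (each entry of $\mathbf{J}^2$ sums $k+1$ ones), $\mathbf{P}^2=\mathbf{I}$ (the exchange matrix is an involution), and $\mathbf{J}\mathbf{P}=\mathbf{P}\mathbf{J}=\mathbf{J}$ (permuting the rows or columns of the all-ones matrix leaves it unchanged). Since $\mathbf{A}_{k+1}$ is square, it suffices to check one product; I would expand
\begin{equation*}
\left(\mathbf{J}-\mathbf{P}\right)\left(\tfrac{1}{k}\mathbf{J}-\mathbf{P}\right)=\tfrac{1}{k}\mathbf{J}^2-\mathbf{J}\mathbf{P}-\tfrac{1}{k}\mathbf{P}\mathbf{J}+\mathbf{P}^2.
\end{equation*}
Substituting the identities turns the first three terms into $\frac{k+1}{k}\mathbf{J}-\mathbf{J}-\frac1k\mathbf{J}=0$ and leaves $\mathbf{P}^2=\mathbf{I}$, establishing that the two matrices are mutually inverse.

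As an alternative that avoids introducing $\mathbf{J}$ and $\mathbf{P}$, I could compute the $(i,\ell)$ entry $\sum_{j} a_{ij}b_{j\ell}$ directly and split into the cases $i=\ell$ and $i\ne\ell$: when $i=\ell$ the two special positions (the zero of row $i$ of $\mathbf{A}_{k+1}$ and the exceptional entry of column $\ell$ of the candidate inverse) coincide, leaving $k$ terms each equal to $\frac1k$ and hence a sum of $1$; when $i\ne\ell$ they are distinct, so the sum contributes $(k-1)\cdot\frac1k-\frac{k-1}{k}=0$. Either route is routine; the only real care needed is the scalar bookkeeping in the coefficient of $\mathbf{J}$ — equivalently, the case analysis of which index hits the anti-diagonal — which is where an error would most easily creep in.
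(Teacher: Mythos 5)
Your proposal is correct: both the $\mathbf{J}-\mathbf{P}$ decomposition (with the identities $\mathbf{J}^2=(k+1)\mathbf{J}$, $\mathbf{P}^2=\mathbf{I}$, $\mathbf{J}\mathbf{P}=\mathbf{P}\mathbf{J}=\mathbf{J}$) and the entrywise case analysis check out, including the cancellation $\tfrac{k+1}{k}-1-\tfrac{1}{k}=0$. The paper's own proof is just the one-line assertion ``It can be checked directly,'' so your argument is essentially the same approach — a direct verification — with the details the paper omits actually filled in.
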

\begin{proof}
   It can be checked directly.
\end{proof}

The algorithm could be also given as follows.

\begin{breakablealgorithm}
    \renewcommand{\algorithmicrequire}{\textbf{Input:}}
	\renewcommand{\algorithmicensure}{\textbf{Output:}}
	\caption{Brute-force search}
    \label{alg:bru}
    \begin{algorithmic}[1] 
        \REQUIRE  Positive integers $k,n$ satisfying $2\leq k\leq n/2$ and the multiset $X_{n,k}=\{b_1,b_2,\dots,b_m\}$ where $m=\binom{n}{k}$; 
	    \ENSURE The hidden multiset $X=\{x_1,x_2,\dots,x_n\}$; 
        
        \STATE Sort the subset sums such that $b_1\leq b_2\leq \cdots \leq b_m$,  and construct the inverse matrix $\mathbf{A}_{k+1}^{-1}$ where $\mathbf{A}_{k+1}^{-1}[i,j]=\begin{cases}
             \frac{1}{k} & i+j\ne k+2\\
        -\frac{k-1}{k} & i+j=k+2
        \end{cases}$;

        \STATE Initialize an empty column vector $\mathbf{B}_{k+1}=[]$  of length $k+1$ and set $\mathbf{B}_{k+1}[1]= b_1$, $\mathbf{B}_{k+1}[2]=b_2$;
         \STATE Set $c=[1,2,\dots,k+1]$; $s=0$; 
         \WHILE {$s==0$}
           \STATE Set $\mathbf{C}=(b_3,b_4,\dots,b_m);$
           \FORALL {$i=3,4,\dots,k+1$}
               \STATE Set $\mathbf{B}_{k+1}[i]=b_{c[i]}$ and remove $b_{c[i]}$ from  $\mathbf{C}$;
          \ENDFOR
           \STATE Compute $\mathbf{X}_{k+1}=\mathbf{A}_{k+1}^{-1}\mathbf{B}_{k+1}$ and determine all $i$-subset sums of the elements of $\mathbf{X}_{k+1}$, denoted by the array $\mathbf{S}_i$, for all $i=1,2,\dots,k-1$, where $\mathbf{S}_{k-1}[1]=\mathbf{X}_{k+1}
          [1]+\mathbf{X}_{k+1}[2]+\cdots+\mathbf{X}_{k+1}[k-1];$ 
           \FORALL{$j=k+2,k+3,\dots,n$}
             \STATE Let $x=\mathbf{C}[1]-\mathbf{S}_{k-1}[1]$;
               \FORALL{$l=1,2,\dots,\binom{j-1}{k-1}$}
                \IF{$\mathbf{S}_{k-1}[l]+x$ is not in $\mathbf{C}$}
                 \STATE Break;
                 \ELSIF{$j==n$  and $\mathbf{C}==[]$}
                 \RETURN{$\mathbf{S}_{1}$ as a multiset.} // The elements of $\mathbf{S}_{1}$ are exactly $X$'s. 
                \ELSE 
                 \STATE Remove $\mathbf{S}_{k-1}[l]+x$ from $\mathbf{C}$;
               \ENDIF
             \ENDFOR
              \STATE Let $\mathbf{S}_1=[op(\mathbf{S}_1),x]$ and  $\mathbf{S}_{i}=[op(\mathbf{S}_{i}),op(\mathbf{S}_{i-1}+x)]$ for all $i=2,3,\dots,k-1$; // $op(\mathbf{S}_1)$ means opening the array $\mathbf{S}_1$.
           \ENDFOR
               \FORALL{$i=k+1,k,\dots,4$} 
                     \IF{$c[i]<\binom{n-(k-i+2)}{i-2}+1$}
                     \STATE Let $c[i]=c[i]+1$; break;
                    \ELSIF{$c[i]==\binom{n-(k-i+2)}{i-2}+1$ and $c[i-1]<\binom{n-(k-i+1)}{i-3}+1$}
                     \STATE Let $c[i-1]=c[i-1]+1$; 
                     \FORALL{$j=i,i+1,\dots,k+1$}
                        \STATE Let $c[j]=c[j-1]+1$;
                      \ENDFOR
                      \STATE Break;
                     \ENDIF
            \ENDFOR 
             
             \IF{$i==3$ and $c[3]>n-k$} 
                       \STATE Let $s=2$;
             \ENDIF
        \ENDWHILE
        \IF{$s==2$}
          \RETURN{No solution!}
        \ENDIF
\end{algorithmic}
\end{breakablealgorithm}

\noindent\textbf{Complexity analysis}
Using the heapsort algorithm, the time complexity for sorting the subset sums is  
$ O\left(\binom{n}{k} \log \binom{n}{k}\right) $\cite{leiserson1994introduction}. In the worst-case scenario, the number of iterations is bounded by  
$ O\left(\prod_{i=3}^{k+1} \left(\binom{n-(k-i+2)}{i-2} - i + 2 \right)\right)$,  and the time complexity of each iteration is  
$ O\left((k+1)^2 + \binom{n}{k} - k - 1\right). $  
Therefore, the total time complexity is:  
$$  
O\left(\binom{n}{k} \log \binom{n}{k} + \left(\binom{n}{k} + k^2 + k\right) \prod_{i=1}^{k-1} \left(\binom{n-k+i}{i} - i\right)\right), 
$$  
which, by ignoring lower-order terms, is asymptotically equivalent to $$
    O\left(\binom{n}{k}\left(\log \binom{n}{k}+\prod_{i=1}^{k-1}\left(\binom{n-k+i}{i}-i\right)\right)\right).$$ 
This completes the proof of Theorem \ref{thm:bru}. $\hfill\qedsymbol$

If Algorithm \ref{alg:bru} is allowed to continue exploring all possibilities instead of terminating upon finding the first solution, it can be modified to output all solutions.

\section{The approach based on Vieta's formulas}\label{sec:vie}
\subsection{Some notations and the preliminary idea }
In this section, for the hidden multiset $X=\{x_1,x_2,\dots,x_n\}$, let $P_j=\sum_{i=1}^{n}x_i^j$ denote the $j$-th power sum of all elements of $X$. If we take $\{x_1,x_2,\dots,x_n\}$ as $n$ variates, $P_j$'s are symmetric polynomial of $\{x_1,x_2,\dots,x_n\}$. In the following, we do not distinguish between treating 
$\{x_1,x_2,\dots,x_n\}$ as a multiset or as variables. Let $$P_{(j_1,j_2,\dots,j_{\ell})}=\sum_{1\leq i_1\leq n}x_{i_1}^{j_1}\left(\sum_{1\leq i_2\leq n,i_2\ne i_1}x_{i_2}^{j_2}\left(\cdots \sum_{1\leq i_{\ell}\leq n,i_{\ell}\ne i_1,\dots,i_{\ell-1}}x_{i_{\ell}}^{j_{\ell}} \right)\right)$$ be the other symmetric polynomials with exponent form $(j_1,j_2,\dots,j_{\ell})$, where $1\leq j_1\leq j_2\leq \cdots \leq j_{\ell} \leq n.$ $(j_1,j_2,\dots,j_{\ell})$ could be taken as a partition of $j_1+j_2+\cdots+j_{\ell}$ with parts in ascending order. All the monomials in
$P_{(j_1,j_2,\dots,j_{\ell})}$ have the same coefficients, denoted by $\mu(j_1,j_2,\dots,j_{\ell}).$ In fact, let $\#(m)$ denote the multiplicity of any positive integer $m$ in $(j_1,j_2,\dots,j_{\ell})$, then $$\mu(j_1,\dots,j_{\ell})=\prod_{j_1\leq m\leq j_{\ell}} \#(m)!.$$ In addition, denote the $u$-th power sum of the multiset $X_{n,k}$ consisting of all $k$-subset sums of $X$ by
$$ S_{u}=\sum_{1\leq i_1<i_2<\cdots<i_k\leq n}(x_{i_1}+x_{i_2}+\cdots+x_{i_k})^u .$$ 

\noindent\textbf{The preliminary idea of this method}

Since $S_{u}$ can be computed for any $u$ using $X_{n,k}$, if the values of other symmetric polynomials of $X$ could be computed, particularly the elementary symmetric polynomials, a univariate polynomial having all elements of $X$ as its roots could be constructed. 

\subsection{Constructing the polynomial we needed}
To apply Vieta's formulas to construct the polynomial we needed, it is necessary to compute the values of the elementary symmetric polynomials of $X$ using $X_{n,k}$.

\subsubsection{Computing $P_1$}
Consider $$S_1=\sum_{1\leq i_1<i_2<\cdots<i_k\leq n}(x_{i_1}+x_{i_2}+\cdots+x_{i_k})=\binom{n-1}{k-1}P_{1},$$
then the value of $P_1$ can be determined uniquely with $S_1$ given.

In addition, we have $P_1-X_{n,k}=X_{n,n-k}$ and if $k=1$, the multiset $X$ is not hidden. Hence, without losing generality, we assume $1<k\leq n/2.$

\subsubsection{Computing $P_2,P_{(1,1)}$}
Consider $$S_2=\sum_{1\leq i_1<i_2<\cdots<i_k\leq n}(x_{i_1}+x_{i_2}+\cdots+x_{i_k})^2=\binom{n-1}{k-1}P_{2}
+ \binom{n-2}{k-2}P_{(1,1)},$$
and $P_1^2=P_{2}+P_{(1,1)}$, then we have
$$\begin{pmatrix} \binom{n-1}{k-1} & \binom{n-2}{k-2}\\
1&1
\end{pmatrix}
\begin{pmatrix}P_2 \\P_{(1,1)}
\end{pmatrix}=
\begin{pmatrix}
S_2 \\P_1^2
\end{pmatrix} .$$
Since the matrix $\begin{pmatrix} \binom{n-1}{k-1} & \binom{n-2}{k-2}\\
1&1
\end{pmatrix}$ is invertible, the values of $P_2$ and $ P_{(1,1)}$ can be determined uniquely with $S_2$ and $P_1$ given.

\subsubsection{Computing $P_{p}$ for any partition $p$ of $u$}

Generally, consider \small{$$\begin{aligned} &S_{u}=\sum_{1\leq i_1<i_2<\cdots<i_k\leq n}(x_{i_1}+x_{i_2}+\cdots+x_{i_k})^u
=\binom{n-1}{k-1}P_{u}+
\\
& \binom{n-2}{k-2}
\left(\frac{\binom{u}{1}P_{(1,u-1)}}{\mu(1,u-1)}
+\frac{\binom{u}{2}P_{(2,u-2)}}{\mu(2,u-2)}+\cdots
+\frac{\binom{u}{\lfloor \frac{u}{2}\rfloor}P_{(\lfloor \frac{u}{2}\rfloor,u-\lfloor \frac{u}{2}\rfloor)}}{\mu(\lfloor \frac{u}{2}\rfloor,u-\lfloor \frac{u}{2}\rfloor)}\right)+
\\ &
\binom{n-3}{k-3}
\left(
\frac{\binom{u}{1}\binom{u-1}{1}P_{(1,1,u-2)}}{\mu(1,1,u-2)}+
\frac{\binom{u}{1}\binom{u-1}{2}P_{(1,2,u-3)}}{\mu(1,2,u-3)}+
\cdots+ 
\frac{\binom{u}{\lfloor \frac{u}{3}\rfloor}\binom{u-\lfloor \frac{u}{3} \rfloor}{\lfloor\frac{u-\lfloor \frac{u}{3} \rfloor}{2}\rfloor}P_{(\lfloor \frac{u}{3} \rfloor,\lfloor \frac{ u-\lfloor \frac{u}{3} \rfloor}{2}\rfloor,u-\lfloor \frac{u}{3} \rfloor-\lfloor\frac{u-\lfloor \frac{u}{3} \rfloor}{2}\rfloor}}{\mu(\lfloor \frac{u}{3} \rfloor,\lfloor\frac{u-\lfloor \frac{u}{3} \rfloor}{2}\rfloor,u-\lfloor \frac{u}{3} \rfloor-\lfloor\frac{u-\lfloor \frac{u}{3} \rfloor}{2}\rfloor}
\right)
\\ &
+\cdots+
\\ &
\binom{n-u}{k-u}\frac{\binom{u}{1}\binom{u-1}{1}\cdots\binom{2}{1}P_{(1,1,\dots,1)}}{\mu(1,1,\dots,1)} 
\end{aligned}.$$}

Notice that we assume $\binom{m_1}{m_2}=0,$ if $m_1<0$ or $m_2<0$ in this article.

Consider
$$P_1P_{u-1},P_2P_{u-2},\dots,P_{\lfloor \frac{u}{2}\rfloor}P_{u-\lfloor \frac{u}{2} \rfloor},$$
$$P_1P_1P_{u-2},P_1P_2P_{u-3},\dots,P_1^u,$$
where they satisfy
$$P_{j_1}P_{j_2}\cdots P_{j_{\ell}}= P_{j_1+\cdots+j_{\ell}}+
\sum_{s\in \{j_1,\dots,j_{\ell}\}}P_{(s,\sum_{t\in \{j_1,\dots,j_{\ell}\}-\{s\}})}+\cdots+P_{(j_1,\dots,j_{\ell})} $$
or
$$P_{j_1}P_{j_2}\cdots P_{j_{\ell}}=\sum_{\text{all partitions } p \text{ of } u}v((j_1,j_2,\dots,j_{\ell}),p)P_{p}$$
where $v(p_1,p_2)$ denotes the coefficient of $P_{p_2}$ in $\prod_{j_r\in p_1}P_{j_r}$ for any two partitions $p_1,p_2$ of $u$. Specifically, $v(p_1,p_2)$ is equal to the number of ways to combine $p_1$ into $p_2$ with $v(p,p)=1,$ for any partition $p$. For example, $v((1,1,2),(1,3))=2,$ $v((1,1,1),(1,2))=3,$ and $v((1,1,1,1),(2,2))=3,v((1,1,1,1),(1,1,2))=6.$

Hence, the linear transformation matrix from $(
    P_u, P_{(1,u-1)} , \dots , P_{(1,1,\dots,1)})^T$ to $(S_u , P_1P_{u-1} , \dots, (P_1)^u)^T$ is given by
\begin{align}\label{eq:mat}
\begin{pmatrix}
\binom{n-1}{k-1}& \binom{n-2}{k-2}
\frac{\binom{u}{1}}
{\mu(1,u-1)}&
\cdots &
\binom{n-2}{k-2}\frac{\binom{u}{\lfloor \frac{u}{2}\rfloor}}{\mu(\lfloor \frac{u}{2}\rfloor,u-\lfloor \frac{u}{2}\rfloor)}&
\binom{n-3}{k-3}\frac{\binom{u}{1}\binom{u-1}{1}}{\mu(1,1,u-2)}&
\cdots & \binom{n-u}{k-u}\\
1 & 1 & \cdots   &0&0 &\cdots & 0\\
\vdots & \vdots & \ddots  &\vdots &\vdots&\ddots &\vdots\\
1 & 0  & \cdots &1&0&\cdots &0\\
1 & \nu\left(\makecell{(1,1,u-2),\\(1,u-1)}\right) & \cdots & \nu\left(\makecell{(1,1,u-2),\\(\lfloor \frac{u}{2}\rfloor,u-\lfloor \frac{u}{2}\rfloor)}\right) & 1&\cdots & 0\\
\vdots & \vdots & \ddots  &\vdots &\vdots&\ddots &\vdots\\
1& \nu\left(\makecell{(1,1,\dots,1),\\(1,u-1)}\right) & \cdots & \nu\left(\makecell{(1,1,\dots,1),\\ (\lfloor \frac{u}{2}\rfloor,u-\lfloor \frac{u}{2}\rfloor)}\right) & \nu\left(\makecell{(1,1,\dots,1),\\(1,1,u-2)}\right) &\cdots & 1\\
\end{pmatrix},
\end{align} i.e. the matrix equation is 
$$\text{Matrix \eqref{eq:mat}}\begin{pmatrix}
    P_u \\ P_{(1,u-1)} \\ \vdots \\ P_{(1,1,\dots,1)}
\end{pmatrix}=\begin{pmatrix}
    S_u \\P_1P_{u-1}\\ \vdots \\ (P_1)^u
\end{pmatrix}$$
If Matrix \eqref{eq:mat} is invertible, the values of $P_{p}$ for all partitions $p$ of $u$ can be determined uniquely with $S_u,P_1,P_2,\dots,P_{u-1}$ given. In Matrix \eqref{eq:mat}, each row and each column correspond to a partition, respectively. For example, the $2$nd row corresponds to the partition 
$(1,u)$, and the $1$st column corresponds to the partition $(u)$.

Since $\binom{n-u}{k-u}=0$ when $u > k$, in this case, Matrix \eqref{eq:mat} can be replaced by its sub-matrix, which retains only the rows and columns corresponding to the partitions $p$ of $u$ into at most $k$ parts, allowing us to obtain the corresponding $P_u$.

\subsubsection{Applying Newton's identities and Vieta's formulas}
If the linear transformation matrices (i.e., Matrix \eqref{eq:mat}) for $u = 1, 2, \dots, n$ are all invertible, then the corresponding $P_{(1,1,\dots,1)}$'s can be determined uniquely. Consequently, the elementary symmetric polynomials $e_i$'s of $x_1, x_2, \dots, x_n$ can be determined as follows: let $e_1 = P_1$, and $e_i = \frac{P_{(1,1,\dots,1)}}{i!}$ for $i = 2, 3, \dots, k$, where $(1,1,\dots,1)$ consists of $i$ ones. By considering $\binom{n-u}{k-u} = 0$ when $u > k$, the corresponding $P_u$ can be obtained, and we can reduce computation using Newton's identities:
$$
ie_{i} = \sum_{j=1}^{i} (-1)^{j-1} e_{i-j} P_{j},
$$
where $e_0 = 1$.

Then the polynomial $$f(x):=\prod_{i=1}^n(x-x_i)=x^n+\sum_{i=n-1}^{0}(-1)^{n-i}e_{n-i}x^i$$ is what we needed by Vieta's formulas. \textbf{Hence, if the values of $e_i,i=1,2,\dots,n$ could be determined uniquely, then the multiset $X$ is recovered by finding all roots of the polynomial $f(x)$.}

\subsection{The determinant of Matrix \eqref{eq:mat}}
Before presenting the algorithm, we first compute the determinant of Matrix \eqref{eq:mat}. If Matrix \eqref{eq:mat} is not invertible, the algorithm fails. Therefore, we derive the determinant of the matrix and analyze the conditions under which the algorithm becomes invalid.

\begin{theorem}\label{thm:det}
The determinant of Matrix \eqref{eq:mat} is
$$\sum_{i=1}^{u}(-1)^{i-1}(i -1)!S(u,i)\binom{n-i}{k-i}, \label{eq:det}$$ where $S(u,i)$ is the Stirling partition number.
\end{theorem}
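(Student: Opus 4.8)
The plan is to read Matrix~\eqref{eq:mat} as a one‑row perturbation of the base‑change matrix between the power‑sum‑product family and the family $\{P_p\}$, and then to evaluate a single column of the inverse base‑change matrix by M\"obius inversion on the partition lattice. Throughout, index both rows and columns of Matrix~\eqref{eq:mat} by the partitions $p$ of $u$, write $\ell(p)$ for the number of parts $p_1,\dots,p_{\ell(p)}$ of $p$, and let $N(p)=\frac{u!}{\mu(p)\prod_r p_r!}$ be the number of set partitions of $\{1,\dots,u\}$ whose block sizes are the parts of $p$, so that $\sum_{\ell(p)=i}N(p)=S(u,i)$. First I would record the entries. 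The rows below the first are exactly the coefficients $v(p_1,p)$, so that sub-array is the transition matrix $V=(v(p_1,p))$ expressing each power‑sum product $\prod_{j\in p_1}P_j$ in the $\{P_p\}$ family. The first row is the expansion of $S_u$ in the same family; expanding $\big(\sum_{i\in I}x_i\big)^u$ by the multinomial theorem and summing over the $k$‑subsets $I$ shows that the coefficient of $P_p$ there is $c_p=N(p)\binom{n-\ell(p)}{k-\ell(p)}$, which vanishes for $\ell(p)>k$ in agreement with the convention $\binom{n-\ell}{k-\ell}=0$.

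Next I would exploit triangularity. Since $v(p_1,p)\ne 0$ only when $p$ is obtained by merging parts of $p_1$ (hence $\ell(p)\le \ell(p_1)$) and $v(p,p)=1$, ordering partitions by increasing number of parts makes $V$ triangular with unit diagonal, so $V$ is invertible and $\det V=1$. Write $M$ for the matrix in~\eqref{eq:mat}. Because $M$ agrees with $V$ in every row but the first, I can factor $M=LV$, where $L$ is the identity matrix whose first row has been replaced by $(c_p)_p V^{-1}$. Then $\det M=\det L\cdot\det V=\det L$, and since $L$ is the identity outside its first row, $\det L$ equals its $(1,1)$ entry, giving
\[
\det M=\sum_{p\vdash u}c_p\,(V^{-1})_{p,(u)},
\]
where $(V^{-1})_{p,(u)}$ is the coefficient of the single power sum $P_{(u)}=\sum_i x_i^u$ when $P_p$ is expanded in power‑sum products.

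The crux is therefore to show $(V^{-1})_{p,(u)}=(-1)^{\ell(p)-1}(\ell(p)-1)!$, i.e.\ that this coefficient depends only on the number of parts. To see this I would apply the product‑to‑$P_p$ relation not to the $x_i$ but to the $\ell:=\ell(p)$ parts of $p$: grouping the terms of $\prod_{r=1}^{\ell}P_{p_r}$ by the set partition $\pi$ of $\{1,\dots,\ell\}$ recording which factors share a variable yields $\prod_{r}P_{p_r}=\sum_{\pi\in\Pi_\ell}P_{\mathrm{merge}(p,\pi)}$, where $\Pi_\ell$ is the lattice of set partitions of an $\ell$‑element set and $\mathrm{merge}(p,\pi)$ fuses the parts of $p$ according to $\pi$. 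Applied to every sub‑product this is a zeta‑function relation over $\Pi_\ell$, so M\"obius inversion expresses $P_p$ (the bottom element $\hat{0}$) as $\sum_{\pi\in\Pi_\ell}\mu_{\Pi_\ell}(\hat{0},\pi)\cdot\big(\text{power‑sum product of }\mathrm{merge}(p,\pi)\big)$. Only $\pi=\hat{1}$ merges everything into the single part $u$, and power‑sum products are linearly independent, so the coefficient of $P_{(u)}$ is $\mu_{\Pi_\ell}(\hat{0},\hat{1})=(-1)^{\ell-1}(\ell-1)!$. Equivalently, and more elementarily, once one knows (by induction on $\ell$, or from the above) that the coefficient depends only on $\ell$, say $a_\ell$, taking the coefficient of $P_{(u)}$ on both sides of the merging identity gives the recursion $\sum_{j=1}^{\ell}S(\ell,j)\,a_j=0$ for $\ell\ge 2$ with $a_1=1$, whose solution is $a_\ell=(-1)^{\ell-1}(\ell-1)!$ by the Stirling inversion identity $\sum_{j}(-1)^{j-1}(j-1)!\,S(\ell,j)=\delta_{\ell,1}$.

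Substituting back, $\det M=\sum_{p\vdash u}N(p)\binom{n-\ell(p)}{k-\ell(p)}(-1)^{\ell(p)-1}(\ell(p)-1)!$; collecting partitions by their number of parts $i=\ell(p)$ and using $\sum_{\ell(p)=i}N(p)=S(u,i)$ yields exactly $\sum_{i=1}^{u}(-1)^{i-1}(i-1)!\,S(u,i)\binom{n-i}{k-i}$. I expect the main obstacle to be this crux step: proving that $(V^{-1})_{p,(u)}$ is uniform in $p$ and equal to the partition‑lattice M\"obius value, which requires setting up the merging identity carefully (tracking the multiplicities $\mu$ through the identification $P_p=\mu(p)\,m_p$) and invoking or re‑deriving the Stirling inversion identity. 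The bookkeeping for $u>k$, where the relevant matrix is the sub‑array on partitions with at most $k$ parts, is then immediate, since the omitted terms all carry the vanishing factor $\binom{n-\ell}{k-\ell}$.
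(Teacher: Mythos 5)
Your proof is correct, and it reaches the paper's formula by a genuinely different route at the key step. Both arguments share the same outer skeleton: reduce the determinant of Matrix \eqref{eq:mat} to a sum of first-row entries $c_p=N(p)\binom{n-\ell(p)}{k-\ell(p)}$ weighted by quantities depending only on $\ell(p)$, then group by $i=\ell(p)$ using $\sum_{\ell(p)=i}N(p)=S(u,i)$. The difference is how those weights are obtained. The paper posits a vanishing combination of the columns, argues block by block that its coefficients must be constant on partitions of equal length, solves the resulting Stirling-triangle system \eqref{eq6} by induction via the identity $\sum_{j=1}^{\ell}(-1)^{j-1}(j-1)!\,S(\ell,j)=0$, and then converts that solution into the first-row cofactors through the Cramer's-rule Lemma \ref{matrixlem}. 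You instead factor $M=LV$ with $V$ the unitriangular transition matrix from the $\{P_p\}$ family to power-sum products, so that $\det M=\sum_{p\vdash u}c_p\,(V^{-1})_{p,(u)}$, and evaluate $(V^{-1})_{p,(u)}$ --- the coefficient of $P_u$ in the power-sum-product expansion of $P_p$ --- as the partition-lattice M\"obius value $\mu_{\Pi_{\ell}}(\hat 0,\hat 1)=(-1)^{\ell-1}(\ell-1)!$ via your merging/zeta relation. Your route buys automatic uniformity of the weight in $p$ (no separate block-by-block argument) and gives the constants a standard combinatorial meaning, tying the theorem to the classical change of basis between power sums and augmented monomial symmetric functions; the paper's route stays within elementary linear algebra and needs no lattice M\"obius theory, and indeed your fallback recursion $\sum_{j=1}^{\ell}S(\ell,j)a_j=0$, $a_1=1$, is exactly the paper's system \eqref{eq6}, so the two arguments coincide there. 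Two small points to tidy in a full write-up: the appeal to linear independence of power-sum products needs $u\le n$ (satisfied here, since $u$ ranges over $1,\dots,n$), or else should be replaced by purely order-theoretic M\"obius inversion on $\Pi_\ell$; and your $M=LV$ step is in substance the same cofactor identification the paper isolates in Lemma \ref{matrixlem}, just packaged more transparently.
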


\begin{proof}
First, consider the Laplace expansion along the first row of Matrix \eqref{eq:mat}. Let $l_i$ be the number of partitions with $i$ elements. The determinant is then given by the following expression: $$\sum_{i=1,\ldots,u,j=1,\ldots,l_i}d_{i,j}\frac{\prod_{t=1}^{i-1}\binom{u_{i,j}[t]}{p_{i,j}[t]}}{\mu(p_{i,j})} \binom{n-i}{k-i},$$
 where $p_{i,j}$ is the $j$-th partition into $i$ parts, $d_{i,j}$ is the algebraic cofactor of the item corresponding to $p_{i,j}$, and $u_{i,j}[1]=u,u_{i,j}[t]=u_{i,j}[t-1]-p_{i,j}[t-1]$ for $t=2,3,\dots,i-1$. From the structure of Matrix \eqref{eq:mat}, we observe that $d_{1,1}=1.$

 Matrix \eqref{eq:mat} is non-invertible if and only if there exists non-trivial $$c_{1,1},c_{2,1},\ldots,c_{2,l_2},c_{3,1},\ldots,c_{3,l_3},\ldots, c_{u,l_u}$$ satisfying
\begin{align*}
    \sum_{i=1,\ldots,u,j=1,\ldots,l_i} c_{i,j}\bm v_{i,j}=\bm 0
\end{align*}
where $\bm v_{i,j}$ denotes the $\big((\sum_{t=1}^{i-1} l_t)+j\big)$-th column of Matrix \eqref{eq:mat}, $\bm 0$ denotes zero vector of appropriate size.

Let $\hat{\bm v}_{i,j}$ be the vector of removing the first entry of $\bm v_{i,j}$, the above equality also holds for all $\hat{\bm v}_{i,j}$s, i.e.,
\begin{align*}
    \sum_{i=1,\ldots,u,j=1,\ldots,l_i} c_{i,j}\hat{\bm v}_{i,j}=\bm 0.
\end{align*}
Considering only the first $1+l_2$ rows of the above equation we have that
\begin{align*}
   \forall j=\{1,\ldots,l_2\}: c_{1,1}+c_{2,j}=0.
\end{align*}
Thus all $c_{2,j}$'s are identical and equal to $-c_{1,1}$. Follow similarly as above, we have
\begin{align*}
    \forall j=\{1,\ldots,l_3\}: c_{1,1}+S(3,2)c_{2,j}+c_{3,j}=0,
\end{align*}
thus all $c_{3,j}$'s are identical. Repeat the above process, we conclude that for all $i={2,\ldots,u}$, all corresponding $\{c_{i,j}\}_{j=1,\ldots,l_i}$ are identical. For notatin simplicity we omit the $j$ notation and remark that all constants $c$ should satisfy the following
\begin{align}\label{eq6}
\begin{pmatrix}
 1&1&0&0&0&\ldots&0\\
 1&S(3,2)&1&0&0&\ldots&0\\
 1&S(4,2)&S(4,3)&1&0&\ldots&0 \\
 1&S(5,2)&S(5,3) &S(5,4) &1&\ldots&0\\
 \vdots &\vdots &\vdots &\vdots &\vdots&\ddots&\vdots\\
  1 &S(u,2) &S(u,3) &S(u,4) &S(u,5)&\ldots &1
\end{pmatrix}
\begin{pmatrix}
c_1\\ c_2\\ c_3\\ c_4 \\ \vdots\\c_u
\end{pmatrix} =
\begin{pmatrix}
0\\ 0\\ 0\\0\\ \vdots\\0
\end{pmatrix}
.\end{align}

Without losing generality, set $c_1=1$. Then we claim $c_i=(-1)^{i-1}(i-1)!$ for all $i$. Next, we will use the recursive method to prove this claim:

If $i=2$, since $c_1+c_2=0$, we have $c_2=-1=(-1)^11!$. If $i=3$, since $c_1+c_2S(3,2)+c_3=0$, we have $c_3=-c_1-c_2S(3,2)=-1-(-1)\cdot3=2=(-1)^22!.$
Assume $c_i=(-1)^{i-1}(i-1)!$ holds for $1\leq i < j$, then we consider $c_{j}$.
From Equality (\ref{eq6}), we obtain
$$
c_j=-\sum_{k=1}^{j-1}c_kS(j,k)
=(-1)^{j-1}(j-1)!S(j-1,j-1)
=(-1)^{j-1}(j-1)!,$$
where the second equality holds by the alternating sum formula $\sum_{k=1}^{j}(-1)^{j-1}(j-1)!S(j,k)=0$. Thus, the claim holds, i.e. $c_i=(-1)^{i-1}(i-1)!$ for all $i$. 

Since $d_{1,1}=1=c_{1,1}$, it follows that $d_{i,j}=c_i=(-1)^{i-1}(i-1)!$ for all $j$ by the following Lemma \ref{matrixlem}. Thus, the determinant of Matrix \eqref{eq:mat} is 
\begin{equation*}
    \sum_{i=1,\ldots,u,\\j=1,\ldots,l_i}(-1)^{i-1}(i-1)!\frac{\prod_{t=1}^{i-1}\binom{u_{i,j}[t]}{p_{i,j}[t]}}{\mu(p_{i,j})} \binom{n-i}{k-i}=\sum_{i=1}^{u}(-1)^{i-1}(i-1)!S(u,i)\binom{n-i}{k-i},
\end{equation*}
where $S(u,i)=\sum_{j=1,\ldots,l_i}\frac{\prod_{t=1}^{i-1}\binom{u_{i,j}[t]}{p_{i,j}[t]}}{\mu(p_{i,j})}$ by definition.
\end{proof}

\begin{lemma} \label{matrixlem}
For any square matrix $\mathbf{M}=(m_{ij})$ of order $m$, its determinant could be given as $\sum_{i=1}^{m} A_{1i}m_{1i}$, where $A_{1i}$ is the algebraic cofactor of $m_{1i}$. Consider the system of linear equations $\mathbf{M}'(y_2,y_3,\dots,y_m)=(-m_{21},-m_{31},\dots,-m_{m1})$, where $\mathbf{M}'$ is the invertible sub-matrix of $\mathbf{M}$ removing the first row and the first column, then $y_i=\frac{A_{1i}}{A_{11}},i=2,3,\dots,m$.
\end{lemma}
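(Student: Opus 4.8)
The plan is to prove the lemma directly from the classical Laplace expansion theorem, including expansion by ``alien'' cofactors, since the claimed identity $y_i = A_{1i}/A_{11}$ is essentially a restatement of that theorem combined with the uniqueness of solutions to an invertible linear system. (The first sentence of the lemma, $\det \mathbf{M} = \sum_{i=1}^m A_{1i} m_{1i}$, is just the standard first-row Laplace expansion and requires no separate argument.)

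First I would recall the alien-cofactor identity: for any $m \times m$ matrix $\mathbf{M} = (m_{ij})$, pairing the entries of row $i$ with the cofactors of row $k$ gives $\sum_{j=1}^m m_{ij} A_{kj} = \delta_{ik} \det \mathbf{M}$, where for $i \neq k$ the left-hand side is the first-row Laplace expansion of the matrix obtained from $\mathbf{M}$ by overwriting row $1$ with row $i$, a matrix with two equal rows, hence zero. Specializing to $k = 1$ and any $i \geq 2$ yields $\sum_{j=1}^m m_{ij} A_{1j} = 0$; separating the $j=1$ term gives $m_{i1} A_{11} + \sum_{j=2}^m m_{ij} A_{1j} = 0$ for every $i = 2, \ldots, m$.

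Next I would observe that $A_{11} = \det \mathbf{M}'$, since deleting row $1$ and column $1$ produces exactly $\mathbf{M}'$ and the cofactor sign is $(-1)^{1+1} = 1$; this is nonzero by the invertibility hypothesis on $\mathbf{M}'$. Dividing the previous identity by $A_{11}$ and setting $y_j := A_{1j}/A_{11}$ gives $\sum_{j=2}^m m_{ij} y_j = -m_{i1}$ for all $i = 2, \ldots, m$. The coefficient array $(m_{ij})_{2 \le i,j \le m}$ is precisely $\mathbf{M}'$ and the right-hand side is $(-m_{21}, \ldots, -m_{m1})^T$, so the vector $(A_{12}/A_{11}, \ldots, A_{1m}/A_{11})^T$ solves the stated system $\mathbf{M}' \mathbf{y} = (-m_{21}, \ldots, -m_{m1})^T$. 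Since $\mathbf{M}'$ is invertible this solution is unique, which is exactly $y_i = A_{1i}/A_{11}$.

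The computation is routine, so I do not expect a serious obstacle; the only points needing care are verifying that the alien-cofactor expansion for $i \ge 2$ vanishes and correctly identifying the submatrix $(m_{ij})_{2\le i,j\le m}$ with $\mathbf{M}'$. As an alternative route I would apply Cramer's rule to $\mathbf{M}'\mathbf{y}=(-m_{21},\ldots,-m_{m1})^T$, expressing each $y_j$ as a ratio of determinants; matching the numerator with $A_{1j}$ then requires tracking the $(-1)^{j}$ sign incurred when the replaced column (equal to the negated first column of $\mathbf{M}$) is permuted back into the slot of the deleted first column, and this sign is precisely what converts the minor into the signed cofactor $A_{1j}$. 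Both routes reach the conclusion, and I would present the alien-cofactor version as the cleaner argument.
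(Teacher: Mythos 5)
Your proposal is correct, and your primary argument takes a genuinely different (if closely related) route from the paper: the paper's proof is precisely your sketched ``alternative route.'' The paper applies Cramer's rule to $\mathbf{M}'\mathbf{y}=(-m_{21},\dots,-m_{m1})^T$, writing $y_i=\det(\mathbf{M}_i')/\det(\mathbf{M}')$ with $\mathbf{M}_i'$ the matrix obtained by replacing the $(i-1)$-th column of $\mathbf{M}'$ by the right-hand side, and then does the sign bookkeeping $\det(\mathbf{M}')=A_{11}$ and $\det(\mathbf{M}_i')=-(-1)^{i-2}(-1)^{i+1}A_{1i}=A_{1i}$ --- exactly the delicate permutation-of-columns sign tracking you flag in your alternative. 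Your main argument instead avoids Cramer's rule entirely: you use the alien-cofactor identity $\sum_{j=1}^m m_{ij}A_{1j}=0$ for $i\geq 2$ (zero because it is the expansion of a matrix with two equal rows), divide by $A_{11}=\det\mathbf{M}'\neq 0$, observe that this says the candidate vector $(A_{12}/A_{11},\dots,A_{1m}/A_{11})^T$ satisfies the stated system, and conclude by uniqueness of the solution of an invertible system. What your route buys is that all sign subtleties are absorbed into one standard, symmetric fact (expansion by alien cofactors vanishes), so there is no ad hoc $(-1)$ computation to get wrong; what the paper's route buys is brevity, since Cramer's rule delivers the solution formula in one line and only the identification of the two determinants with cofactors remains. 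Both are complete and correct proofs of the lemma.
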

\begin{proof}
    It follows from Cramer rule that $y_i=\frac{\det(\mathbf{M}_i')}{\det(\mathbf{M}')}$ where $\mathbf{M}_i'$ is the matrix replacing the $(i-1)$-th column of the matrix $\mathbf{M}'$ with $(-m_{21},-m_{31},\dots,-m_{m1})$ for all $i=2,3,\dots,m$. Since $\det(\mathbf{M}')=(-1)^{1+1}A_{11}=A_{11}$ and $\det(\mathbf{M}_i')=-(-1)^{i-2}(-1)^{i+1}A_{1i}=A_{1i}$ for all $i=2,3,\dots,m$. The lemma follows.
\end{proof}

In fact, the determinant of Matrix (\ref{eq:mat}) is equal to the Moser polynomial\cite{selfridge1958determination,fomin2019multiset}: $$\sum_{i=1}^{k}(-1)^{i-1}i^{u-1}\binom{n}{k-i},$$ which can be verified using the identity $\sum_{j=i}^{k}\binom{j-1}{i-1}\binom{n-j}{k-j}=\binom{n}{k-i}$\cite{ewell1968determination}. 
\begin{remark}
    For any positive integers $k,u\leq n$, the following equality holds:
$$\sum_{i=1}^{u}(-1)^{i-1}(i -1)!S(u,i)\binom{n-i}{k-i}=\sum_{j=1}^{k}(-1)^{j-1}j^{u-1}\binom{n}{k-j}.$$
\end{remark}

\begin{proof}
\begin{align*}
    &\sum_{i=1}^{u}(-1)^{i-1}(i -1)!S(u,i)\binom{n-i}{k-i}\\
    =&\sum_{i=1}^{k}(-1)^{i-1}(i-1)!\frac{1}{i!}\sum_{j=0}^{i}\left((-1)^{i-j}\binom{i}{j}j^u\right)\binom{n-i}{k-i}\\
    =&\sum_{j=1}^{k}(-1)^{j+1}j^{u-1}\sum_{i=j}^{k}\binom{i-1}{j-1}\binom{n-i}{k-i}\\
    =&\sum_{j=1}^k(-1)^{j-1}j^{u-1}\binom{n}{k-j}
\end{align*}
\end{proof}

\begin{remark}
     For fixed $k$ and $u$, the Morse polynomial can be viewed as a $k-1$-degree polynomial in $n$, with a leading coefficient of $\frac{1}{(k-1)!}$ and a constant term of $(-1)^{k-1}k^{u-1}$. 
\end{remark}
\begin{proof}
    The proof is straightforward and thus omitted.
\end{proof}

Hence, when $u$ and $k$ are fixed, the values of $n$ for which Matrix \eqref{eq:mat} is not invertible must satisfy $n|(k-1)!k^{u-1}$.

\vspace{1em}\noindent\textbf{Some examples}

Fixing $k=2$ and any $u$, the determinant of Matrix \eqref{eq:mat} is $$(n-1)-(2^{u-1}-1)=n-2^{u-1}.$$  For this case, we know that only when $n=2^{u-1}$ the matrix is not invertible.

If $k=3$, the determinant is $$\begin{aligned}\binom{n-1}{2}-\binom{n-2}{1}(2^{u-1}-1)
+(3^{u-1}-2^u+1)=\frac{1}{2}\cdot\left(n^2-(2^u+1)n+2\cdot 3^{u-1}\right)
\end{aligned}.$$

If $k=4,$ the determinant is
$$\frac{1}{6}\left(n^3-3(2^u+1)n^2+(2\cdot3^u+3\cdot2^{u-1}+2)n-(6\cdot4^{u-1})\right).$$

\subsection{Symmetric subset sum recovery algorithm}

\textit{Proof of Theorem \ref{thm:vie}}

Incorporating the content mentioned above in this section, we have the following algorithm, which is called symmetric subset sum recovery algorithm, for convenience.
\begin{breakablealgorithm}
    \renewcommand{\algorithmicrequire}{\textbf{Input:}}
	\renewcommand{\algorithmicensure}{\textbf{Output:}}
	\caption{Symmetric subset sum recovery algorithm}
    \label{alg:vien}
    \begin{algorithmic}[1] 
        \REQUIRE Positive integers $k,n$ satisfying $2\leq k\leq n/2$ and the multiset $X_{n,k}=\{b_1,b_2,\dots,b_m\}$ where $m=\binom{n}{k}$; 
	    \ENSURE The hidden multiset $X=\{x_1,x_2,\dots,x_n\}$; 

         \IF{$n|(k-1)!k^n$}
            \FOR{$u=1,2,...,n$}
              \IF{$\sum_{i=1}^{k}(-1)^{i-1}i^{u-1}\binom{n}{k-i}$}
                 \RETURN The algorithm fails in this case!
             \ENDIF
             \ENDFOR
         \ENDIF

         \STATE Set $\mathbf{P}=[]$ and $\mathbf{E}=[1]$; 
        \FORALL{$u=1,2,\dots,n$}

            \STATE  Let the array $\mathbf{L}$ consist of all partitions of $u$ into at most $k$ parts, arranged in ascending order by the cardinality of the partitions. Specifically,  $\mathbf{L}[1] = (u)$, $\mathbf{L}[n(\mathbf{L})] = (1, 1, \dots, 1)$, where $n(\mathbf{L})$ denotes the length of the array $\mathbf{L}$;
            
           \STATE Initialize a square zero matrix $\mathbf{M}$ of order $n(\mathbf{L})$;

          \FORALL{$j=1,2,\dots,n(\mathbf{L})$}
              \IF{$k > n(\mathbf{L}[j])$}
                \STATE break;
              \ENDIF
            \STATE Let $\mathbf{M}(1,j)=\binom{n-n(\mathbf{L}[j])}{k-n(\mathbf{L}[j])}\frac{\prod_{l=1}^{n(\mathbf{L}[j])-1}\binom{u_{l}}{\mathbf{L}[j][l]}}{\mu(\mathbf{L}[j])}$ with $u_1=u,u_l=u_{l-1}-\mathbf{L}[j][l-1]$ for $l=2,3,\dots,n(\mathbf{L}[j])-1$ ;
           \ENDFOR
          \FORALL{$i=2,3,\dots,n(\mathbf{L})$}
             \FORALL{$j=1,2,\dots,n(\mathbf{L})$}
               \STATE Let $\mathbf{M}(i, j) = v(\mathbf{L}[i], \mathbf{L}[j])$;
            \ENDFOR
        \ENDFOR
           
        \STATE Let $\mathbf{S}=[\sum_{i=1}^{m}(B[i])^u]$;
        \FORALL{$i=2,3,\dots,n(\mathbf{L})$}
           \STATE Let $\mathbf{S}=[op(\mathbf{S}),\prod_{j=1}^{n(\mathbf{L}[i])}\mathbf{P}_{\mathbf{L}[i][j]}]$
        \ENDFOR
          
          \STATE Let $\mathbf{P}=[op(\mathbf{P}),(\mathbf{M}^{-1}\mathbf{S})[1]]$
    \ENDFOR

    \FORALL{$i=1,2,\dots,n$}
       \STATE Let $\mathbf{E}=[op(\mathbf{E}),\frac{1}{i}\left(\sum _{j=1}^{i}(-1)^{j-1}\mathbf{E}[i-j+1]\mathbf{P}[j]\right)]$;
    \ENDFOR

   \STATE Find the $n$ roots of the polynomial $x^n+\sum_{i=n-1}^{0}(-1)^{n-i}\mathbf{E}[n-i+1]x^i$ of degree $n$ in one variable and let these roots form the multiset $X$;

   \RETURN $X$
    
    \end{algorithmic}
\end{breakablealgorithm}

\noindent\textbf{Complexity analysis}

The time complexity of computing all $S_u$'s is $O\left(\binom{n}{k}n\right)$. The time complexity of generating all partitions into at most $k$ parts and computing all $P_{i_1} P_{i_2}  \cdots P_{i_k}$'s is $$O\left(\sum_{u=1}^{n}p(u,\leq k) \min(u,k)\right).$$  To facilitate efficient calculation of combinations, the first $n$ rows of Pascal's triangle can be precomputed with a time complexity of $O(n^2)$ \cite{leiserson1994introduction}. The overall time complexity for computing all $v(p_1,p_2)$ values is $O(\sum_{u=1}^{n}p(u,\leq k)^2\min(u,k)^2)$ using a dynamic programming approach. Consequently, the time complexity of constructing all Matrix \eqref{eq:mat}'s can be expressed as $$O\left(n^2+\sum_{u=1}^np(u,\leq k) ^2\min(u,k)^2+p(u,\leq k) \min(u,k)\right).$$ Computing the inverse of Matrix \eqref{eq:mat} or solving the associated system of linear equations  takes $O\left(\sum_{u=1}^n p(u,\leq k)^3\right)$ time \cite{trefethen1997numerical}. Using the characteristic polynomial method, the time complexity of finding all roots of a univariate polynomial of degree $n$ is $O(n^3)$ \cite{edelman1995polynomial,trefethen1997numerical}. 
Thus, the total time complexity, ignoring some lower-complexity computations, becomes
$$O\left(\sum_{u=1}^n \left(p(u,\leq k) ^3+(p(u,\leq k) \min(u,k)+1)^2\right)+\binom{n}{k}n+n^2+n^3\right),$$
which, by ignoring lower-order terms further, is asymptotically equivalent to $$O\left(\sum_{u=1}^n p(u,\leq k) ^3+\binom{n}{k}n\right).$$ 
This completes the proof of Theorem \ref{thm:vie}. $\hfill\qedsymbol$

 Algorithm \ref{alg:vien} can be generalized to the \((n, k)\)-complete HSSP defined over other fields or rings, where $n!$ and every nonzero value of  
$$\sum_{j=1}^{k}(-1)^{j-1}j^{u-1}\binom{n}{k-j}, \quad u = 1, 2, \dots, n,$$ 
are invertible and Vieta's formulas hold.

As an additional note, the generating function \cite{andrews2004integer} of $p(u)$ is given by
$$\sum_{u=0}^{\infty}p(u)x^u=\prod_{j=1}^{\infty}\sum_{i=0}^{\infty}x^{ji}
=\prod_{j=1}^{\infty}(1-x^j)^{-1} .$$
Currently, no closed-form expression for the partition function is known, but it has both asymptotic expansions that accurately approximate it and recurrence relations by which it can be calculated exactly.  As $u$ becomes large, $p(u)$ grows exponentially in the square root of its argument\cite{andrews2004integer,hardy1918asymptotic}. Specifically, it has the asymptotic form:
$$ p(u)\backsim \frac{1}{4u\sqrt{3}}\exp\left(\pi\sqrt{\frac{2u}{3}}\right) \text{ as } u\rightarrow \infty.$$

\subsection{Nontrivial permutation-symmetric subset of $X_{n,k}$}
This method appears to be applicable to any permutation-symmetric subset of $X_{n,k}$, where permutation-symmetric means the subset of $X_{n,k}$ remains invariant under any permutation of $x_1,x_2,\dots,x_n$. However, we claim that $X_{n,k}$ does not contain a nontrivial permutation-symmetric subset in which every element of $X$ contributes to the subset sums. If some elements of $X$ do not contribute to the subset sums of a subset of $X_{n,k}$, the subset might exhibit symmetry. However, we do not consider such cases, as they can be effectively treated as instances with a smaller $n$.

\begin{theorem}
Let $X_{n,k}$ be the multiset of all $k$-subset sums of finite multiset $X=\{x_1,x_2,\dots,x_n\}$ of cardinality $n$, then $X_{n,k}$ does not have nontrivial permutation-symmetric subset with every element of $X$ contributing to the subset sums.
\end{theorem}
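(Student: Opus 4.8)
The plan is to reduce this to a purely combinatorial fact about the action of the symmetric group $S_n$ on the $k$-subsets of $[n]=\{1,2,\dots,n\}$. Treating $x_1,\dots,x_n$ as algebraically independent (as in the background definition), each $k$-subset $S\subseteq[n]$ determines a distinct linear form $\ell_S=\sum_{i\in S}x_i$, and distinct index sets give distinct forms. Hence a sub-multiset $T\subseteq X_{n,k}$ is in bijection with a family $\mathcal F\subseteq\binom{[n]}{k}$ of index sets via $T=\{\ell_S:S\in\mathcal F\}$. First I would record this bijection carefully, since it is exactly what lets me pass from statements about the multiset of sums to statements about families of subsets, with no multiplicity ambiguities thanks to the distinctness of the $\ell_S$.

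Next I would translate the symmetry hypothesis. A permutation $\sigma\in S_n$ acts by $x_i\mapsto x_{\sigma(i)}$, sending $\ell_S$ to $\ell_{\sigma(S)}$. Therefore $T$ is invariant under every $\sigma$ precisely when $\mathcal F$ is a union of $S_n$-orbits on $\binom{[n]}{k}$, i.e. $\sigma(\mathcal F)=\mathcal F$ for all $\sigma$. This is also exactly the condition under which all power sums $S_u=\sum_{S\in\mathcal F}\ell_S^u$ are symmetric polynomials in $x_1,\dots,x_n$, which is what the recovery method of Section~\ref{sec:vie} requires, so the reformulation captures the right notion of symmetry.

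The key step is then transitivity: $S_n$ acts transitively on $\binom{[n]}{k}$, because for any two $k$-subsets $S,S'$ one can choose a permutation carrying $S$ onto $S'$. Consequently there is a single orbit, so the only orbit-closed families are $\mathcal F=\emptyset$ and $\mathcal F=\binom{[n]}{k}$; equivalently, the only permutation-symmetric sub-multisets of $X_{n,k}$ are the empty multiset and $X_{n,k}$ itself. Finally, the hypothesis that every element of $X$ contributes means $\bigcup_{S\in\mathcal F}S=[n]$, which forces $\mathcal F\neq\emptyset$; hence $\mathcal F=\binom{[n]}{k}$ and $T=X_{n,k}$. This is exactly the trivial symmetric subset, so no nontrivial one with full contribution exists.

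I do not expect a serious obstacle, since the heart of the argument is the transitivity of the $S_n$-action. The only points requiring care are the translations in the first two steps: making the multiset-to-family correspondence rigorous through algebraic independence (so that no accidental coincidences among the sums $\ell_S$ occur) and verifying that invariance of the sub-multiset is equivalent to orbit-closedness of $\mathcal F$. I would also add one sentence connecting to the preceding remark: if full contribution is dropped, a family such as all $k$-subsets of a fixed proper subset $\{x_1,\dots,x_m\}$ is invariant only under the smaller group $S_m$ permuting the contributing indices, and thus corresponds to an instance with smaller $n$, which is precisely why those degenerate cases are set aside.
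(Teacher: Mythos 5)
Your proposal is correct and is essentially the paper's argument in group-action clothing: the paper proves the same transitivity fact inline (given a sum in $B$ and a sum outside $B$, fix the common indices and swap the rest to get a permutation carrying one index set to the other, contradicting invariance), which is exactly your claim that $S_n$ acts transitively on $\binom{[n]}{k}$ so the only orbit-closed families are $\emptyset$ and the whole family. Your additional care with the multiset-to-family bijection via algebraic independence, and the closing remark about dropping the full-contribution hypothesis, are consistent with the paper's conventions but do not change the underlying proof.
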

\begin{proof}
Suppose $B$ is such a permutation-symmetric subset of $X_{n,k}$. Since $B$ is not empty, there exists $x_{i_1}+x_{i_2}+\cdots+x_{i_k} \in B$. And since $B\ne X_{n,k}$, there exists $x_{i_1'}+x_{i_2'}+\cdots+x_{i_k'} \notin B$. By the symmetry property, we could arbitrarily exchange the positions of any two elements of $X$. For any $j=1,2,\dots,k$, if $\{i_1,\dots,i_k\} \cap \{i_1',\dots,i_k'\}=\emptyset$, exchange $x_{i_j}$ and $x_{i_j'}$, then $x_{i_1'}+x_{i_2'}+\cdots+x_{i_k'} \in B$, which is a contradiction. If $\{i_1,\dots,i_k\} \cap \{i_1',\dots,i_k'\}$ is not empty, keep the indexes in the intersection set unchange, and exchange the other indexes. Then we could still obtain a contradiction.
\end{proof}

\subsection{The homogeneous symmetric polynomial rings }
Fixing $n$ and $k$, we compute $P_u$ recursively from $u = 1$ to $n$ as described above. However, the Matrix \eqref{eq:mat} may be non-invertible for certain values of $u$. In such cases, it is not possible to uniquely determine the value for any one partition of $u$ without additional information in Algorithm \ref{alg:vien}, leading to the failure of the algorithms. This raises the following question: 
\begin{question}\label{ques1}
    When the Matrix \eqref{eq:mat} becomes non-invertible for certain values of $u$, can we still uniquely determine the values for the partitions of $v$ where $v \geq u$ by modifying Algorithm \ref{alg:vien} without additional information?
\end{question}

Before answering this question, let us first consider the homogeneous symmetric polynomial ring of degree $u$, denoted here by $R_u$, and prove the following theorem.
\begin{theorem}\label{thm:hom}
For a positive integer of $u>1$ and the homogeneous symmetric polynomial ring $R_u$ of degree $u$ over $\mathbb{R}$. A basis of $R_u$ can be constructed by $S_u$ and the products of the elements of $R_v,v<u$ if and only if  Matrix \eqref{eq:mat} is invertible. This is is equivalent to $$\sum_{j=1}^{k}(-1)^{j-1}j^{u-1}\binom{n}{k-j}\ne 0.$$
\end{theorem}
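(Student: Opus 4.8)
The plan is to recast the statement as a change-of-basis question and then invoke Theorem~\ref{thm:det} for the final numerical criterion. The key observation is that Matrix~\eqref{eq:mat} is precisely the transition matrix between two families of elements of $R_u$: the family $\{P_p : p \vdash u\}$ on one side, and on the other the family consisting of $S_u$ together with all products $\prod_{r}P_{j_r}$ indexed by partitions $(j_1,\dots,j_\ell)$ of $u$ with $\ell\ge 2$. Since $1<u\le n$ throughout the recursion, every partition of $u$ has at most $u\le n$ parts, so I expect both families to have cardinality equal to the number of partitions $p(u)$, which is exactly the order of Matrix~\eqref{eq:mat}.

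First I would show that $\{P_p : p\vdash u\}$ is a basis of the degree-$u$ homogeneous component $R_u$. This follows from the classical fact that the monomial symmetric polynomials $m_\lambda$ (the sum of all distinct monomials whose exponent multiset is $\lambda$) form a basis of $R_u$ as $\lambda$ ranges over partitions of $u$ with at most $n$ parts, combined with the identity $P_p=\mu(p)\,m_p$, in which the factor $\mu(p)=\prod_m \#(m)!\ne 0$ merely rescales each $m_p$. Because $u\le n$, the constraint on the number of parts is vacuous, all $p(u)$ partitions contribute, and hence $\dim R_u=p(u)$ with $\{P_p\}$ a genuine basis.

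Next I would read off from the matrix equation preceding Theorem~\ref{thm:det} that the first row of Matrix~\eqref{eq:mat} records the coordinates of $S_u$ in the basis $\{P_p\}$, while the remaining rows record the coordinates of the products $\prod_{r}P_{j_r}$ in the same basis. Each factor $P_{j_r}$ is homogeneous of degree $j_r\le u-1<u$ and hence lies in some $R_v$ with $v<u$, so the second family is exactly ``$S_u$ together with products of elements of the lower-degree rings $R_v$.'' By the standard linear-algebra fact that applying a square matrix to the coordinate vectors of a basis yields a basis if and only if the matrix is invertible, the second family is a basis of $R_u$ if and only if Matrix~\eqref{eq:mat} is invertible; this settles both directions of the equivalence simultaneously. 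Invertibility is then equivalent to non-vanishing of the determinant, and Theorem~\ref{thm:det} together with the subsequent Remark identify that determinant with $\sum_{j=1}^{k}(-1)^{j-1}j^{u-1}\binom{n}{k-j}$, giving the stated closed form.

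I expect the main obstacle to be bookkeeping rather than any deep step: one must confirm that the two families each have exactly $p(u)$ members (so that Matrix~\eqref{eq:mat} is genuinely square of that order), and that the $S_u$ row sits in the correct columns. The latter is the one delicate point, since the entries of that row vanish for partitions into more than $k$ parts because a $k$-subset sum, when expanded, involves at most $k$ distinct variables; I would verify that this vanishing is consistent with the partition ordering used in Matrix~\eqref{eq:mat} so that the matrix whose determinant Theorem~\ref{thm:det} computes really is the transition matrix at hand. Once this alignment is checked, the theorem follows directly from the basis property of $\{P_p\}$ and the determinant formula already established.
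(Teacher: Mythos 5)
Your overall strategy---reading Matrix~\eqref{eq:mat} as the transition matrix from the basis $\{P_p : p \vdash u\}$ to the family $\{S_u, P_1P_{u-1},\dots,P_1^u\}$, invoking the standard change-of-basis fact, and then citing Theorem~\ref{thm:det} together with the Remark for the closed form---is the same as the paper's, and your justification that $\{P_p\}$ really is a basis (via $P_p=\mu(p)\,m_p$ and the monomial symmetric polynomials, using $u\le n$) is in fact more careful than the paper's, which simply asserts it. However, there is a genuine gap in your ``only if'' direction. The theorem quantifies existentially over \emph{all} ways of completing $S_u$ by products of elements of the lower-degree rings $R_v$, $v<u$; the family $\prod_r P_{j_r}$ indexed by partitions of $u$ with at least two parts is only \emph{one} such choice. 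Your sentence claiming the second family ``is exactly $S_u$ together with products of elements of the lower-degree rings'' conflates this particular finite family with the set of all such products: elements of $R_v$ need not be power sums (e.g.\ $P_{(1,1)}\in R_2$), so a priori some other choice of products might complete $S_u$ to a basis even when Matrix~\eqref{eq:mat} is singular. Your argument therefore proves ``Matrix~\eqref{eq:mat} invertible $\Longleftrightarrow$ this specific family is a basis,'' which yields the ``if'' direction of the theorem but not the ``only if.'' This matters for the paper: Theorem~\ref{thm:hom} is later used (Question~\ref{ques1}, Theorem~\ref{thm:homl}) precisely in the strong form that \emph{no} choice works when the matrix is singular.

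The missing step---present in the paper's proof as the remark that no new item linearly independent of $\{P_1P_{u-1},\dots,P_1^u\}$ can be obtained from $R_v$, $v<u$---is to show that every product of elements of lower-degree rings lies in $W:=\operatorname{span}\{P_{j_1}\cdots P_{j_\ell} : (j_1,\dots,j_\ell)\vdash u,\ \ell\ge 2\}$. This holds because each element of $R_v$ is a linear combination of $P_p$'s with $p\vdash v$, and each such $P_p$ is itself a polynomial in the power sums $P_1,\dots,P_v$ (invert the unitriangular system $P_{j_1}\cdots P_{j_\ell}=\sum_{p} v((j_1,\dots,j_\ell),p)P_p$); hence any product of two or more such elements expands into products of at least two power sums whose degrees sum to $u$, i.e.\ into elements of $W$. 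Moreover, rows $2,\dots,p(u)$ of Matrix~\eqref{eq:mat} are linearly independent: their restriction to columns $2,\dots,p(u)$ is block lower-triangular with identity diagonal blocks when partitions are ordered by number of parts, since $v(p_1,p_2)\ne 0$ forces $p_2$ to have no more parts than $p_1$, with $v(p,p)=1$. So $\dim W = p(u)-1$, and if the matrix is singular its first row lies in the span of the remaining rows, i.e.\ $S_u\in W$. Then $S_u$ together with any products of lower-degree elements lies in the proper subspace $W$ of the $p(u)$-dimensional space $R_u$, so no basis can be formed. Adding this argument closes the gap, after which your proof coincides with the paper's.
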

\begin{proof}
    The ring $R_u$ can be taken as a linear space over $\mathbb{R}$, whose dimension is equal to the number of partitions of $u$. Matrix \eqref{eq:mat} is taken as the transformation matrix from the vector $(P_u,P_{(1,u-1)},\dots, P_{(1,1,\dots,1)})$ to the vector $(S_u,P_1P_{u-1},\dots, P_1^u)$. The former vector always forms a basis of $R_u$. For the latter vector, except the first component, the other components are computed by the homogeneous symmetric polynomials of lower degree. From the structure of Matrix \eqref{eq:mat}, it could be observed that $\{P_1P_{u-1},\dots, P_1^u\}$ are linearly independent. At this time, we could not obtain any new item, that is linearly independent of $\{P_1P_{u-1},\dots, P_1^u\}$, from $R_v, v<u$. And $S_u$ is linearly independent from $\{P_1P_{u-1},\dots, P_1^u\}$ if and only if Matrix \eqref{eq:mat} is invertible. Thus, $\{S_u,P_1P_{u-1},\dots, P_1^u\}$ form a basis of $R_u$ if and only if Matrix \eqref{eq:mat} is invertible. Proof of this theorem is thus complete.
\end{proof}

Back to Question \ref{ques1}, the primary available data is the set $X_{n,k}$, which contains all $k$-subset sums of $n$ elements. Consequently, $S_u$ can be uniquely determined for any $u \in \mathbb{Z}_{>0}$. Then, starting from $u=1$ to larger values, we can get the values $P_{p}$ of all partitions. Therefore, we can assume $R_v$'s are known for all $v<u$ before computing $R_u$. The proof of Theorem \ref{thm:hom} shows that choosing $\{P_1P_{u-1},\dots, P_1^u\}$ is the same as choosing other products of the elements of $R_v,v<u$ to form a basis. Hence, $\{S_u,P_1P_{u-1},\dots, P_1^u\}$ is chosen to compute $P_{p}$, where $p$ is a partition of $u$. If we can not determine the value of any $P_{p}$ of degree $u$ uniquely with this choice, neither could the others.

Furthermore, if there exists $1<u_0<n$ such that Matrix \eqref{eq:mat} is non-invertible, then a basis of $R_{u_0}$ cannot be constructed from $S_u$ for all $u\leq u_0$. At this time, can a basis of $R_{u}$, where $u\geq u_0$, be constructed from  $S_v$ for all $v \leq u$? To answer this question, we have the following theorem.

\begin{theorem}\label{thm:homl}
    If there exists $1<u_0<n$ such that Matrix \eqref{eq:mat} is non-invertible when $u=u_0$ and is invertible for all $u<u_0$, then a basis of $R_{u}$, where $u<u_0$, can be constructed from $S_v,v<u_0$. But a basis of $R_{u}$, where $u\geq u_0$, cannot be constructed from $S_v,v\leq u$.
\end{theorem}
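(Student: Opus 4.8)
The plan is to translate the statement into a comparison of two graded algebras. Work in the ring of symmetric polynomials $R=\mathbb{R}[P_1,\dots,P_n]$, a graded polynomial algebra in the algebraically independent power sums with $\deg P_i=i$, whose degree-$u$ component is exactly $R_u$; for $u\le n$ the power-sum products $P^{p}:=P_{p_1}\cdots P_{p_{\ell}}$ indexed by partitions $p=(p_1,\dots,p_\ell)$ of $u$ form a basis, so $\dim R_u=p(u)$. I would let $\mathcal{A}:=\mathbb{R}[S_1,\dots,S_n]\subseteq R$ be the graded subalgebra generated by the subset-sum power sums; its degree-$u$ part $\mathcal{A}_u$ is spanned by the products $S_{v_1}\cdots S_{v_m}$ with $\sum_i v_i=u$, which is precisely what ``constructed from $S_v,\ v\le u$'' means. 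The theorem then reads: $\mathcal{A}_u=R_u$ for $u<u_0$, and $\mathcal{A}_u\subsetneq R_u$ for $u_0\le u\le n$.

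First I would record the triangular shape of the generators. Reducing modulo $(P_1,\dots,P_{v-1})$ annihilates every $P^{p}$ with $\ell(p)\ge 2$ (such a product has a factor $P_{p_t}$ with $p_t<v$), so $S_v\equiv\alpha_v P_v\pmod{(P_1,\dots,P_{v-1})}$ for a scalar $\alpha_v$; equivalently $S_v=\alpha_v P_v+g_v$ with $g_v\in\mathbb{R}[P_1,\dots,P_{v-1}]$ homogeneous of degree $v$. The decisive link is that $\alpha_v\ne 0$ exactly when $S_v$ is independent of the lower products $\{P_1P_{v-1},\dots,P_1^{\,v}\}$, which by Theorem \ref{thm:hom} happens precisely when Matrix \eqref{eq:mat} is invertible at degree $v$. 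Hence the hypothesis yields $\alpha_v\ne 0$ for all $v<u_0$ and $\alpha_{u_0}=0$.

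For the first half, since $\alpha_v\ne 0$ for $v<u_0$, an induction on $v$ using $P_v=\alpha_v^{-1}(S_v-g_v)$ shows each $P_v$ is a polynomial in $S_1,\dots,S_v$, so $\mathbb{R}[S_1,\dots,S_{u_0-1}]=\mathbb{R}[P_1,\dots,P_{u_0-1}]$. As every basis element $P^{p}$ of $R_u$ with $u<u_0$ is a monomial in $P_1,\dots,P_u\subseteq\{P_1,\dots,P_{u_0-1}\}$, we get $R_u\subseteq\mathbb{R}[S_1,\dots,S_{u_0-1}]$, so a basis of $R_u$ is built from $S_v$ with $v<u_0$. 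For the second half, $\alpha_{u_0}=0$ makes $S_{u_0}=g_{u_0}(P_1,\dots,P_{u_0-1})$ a polynomial in $S_1,\dots,S_{u_0-1}$, i.e. a nonzero homogeneous relation $r=y_{u_0}-h_{u_0}(y_1,\dots,y_{u_0-1})$ of degree $u_0$ (it contains the monomial $y_{u_0}$) in the graded presentation $\Phi:\mathbb{R}[y_1,\dots,y_n]\twoheadrightarrow\mathcal{A}$, $y_v\mapsto S_v$. I would then push $r$ upward: for $u_0\le u\le n$ and any monomial $M$ of degree $u-u_0$, the element $Mr$ lies in $I_u:=(\ker\Phi)_u$ and is nonzero because $\mathbb{R}[y_1,\dots,y_n]$ is a domain, so $\dim I_u\ge 1$. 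Since $\dim\mathbb{R}[y_1,\dots,y_n]_u=p(u)=\dim R_u$, this gives $\dim\mathcal{A}_u=\dim\mathbb{R}[y]_u-\dim I_u\le p(u)-1<\dim R_u$, hence $\mathcal{A}_u\subsetneq R_u$.

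The hard part is the equivalence $\alpha_v\ne 0\iff$ Matrix \eqref{eq:mat} invertible, together with the bookkeeping when $v>k$, where that matrix is the sub-matrix indexed by partitions of $v$ into at most $k$ parts while $R_v$ still has dimension $p(v)$; I would resolve this by invoking Theorem \ref{thm:hom} and the modular characterization $S_v\equiv\alpha_vP_v$ rather than re-deriving the determinant, and by checking that the count $\dim\mathbb{R}[y_1,\dots,y_n]_u=p(u)$ (monomials of degree $u\le n$ with parts $\le n$ are all partitions of $u$) is untouched by the reduction. The propagation of a single degree-$u_0$ relation to nonzero relation spaces in every higher degree is then routine once the domain property is used.
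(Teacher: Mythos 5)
Your proof is correct, but its second half follows a genuinely different route from the paper's. The first halves coincide: both rest on the triangular relation $S_v=\alpha_vP_v+g_v$ with $g_v\in\mathbb{R}[P_1,\dots,P_{v-1}]$ and on Theorem \ref{thm:hom} to identify $\alpha_v\ne 0$ with invertibility of Matrix \eqref{eq:mat}, so that $P_v\in\mathbb{R}[S_1,\dots,S_v]$ for all $v<u_0$. For $u\ge u_0$, the paper argues degree by degree: it selects the products $P_1^{e_1}P_2^{e_2}\cdots P_{u_0-1}^{e_{u_0-1}}$ as a partial spanning set and claims that the elements forced to complete a basis (such as $P_1S_{u_0}$ at degree $u_0+1$, and $P_tS_{u_0}^g$ at degree $w=gu_0+t$) are linearly dependent on that set --- an enumeration that treats products involving $S_v$ with $v>u_0$ only implicitly. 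You instead encode the single fact $S_{u_0}=h_{u_0}(S_1,\dots,S_{u_0-1})$ as a nonzero homogeneous element $r=y_{u_0}-h_{u_0}(y_1,\dots,y_{u_0-1})$ of the kernel of the graded presentation $\Phi:\mathbb{R}[y_1,\dots,y_n]\twoheadrightarrow\mathcal{A}$, propagate it to every degree $u_0\le u\le n$ by multiplying with monomials (using that the polynomial ring is a domain), and conclude with the Hilbert-function count $\dim\mathcal{A}_u\le p(u)-1<p(u)=\dim R_u$. This buys uniformity and rigor: the dimension count automatically dominates \emph{all} products of the generators, including those involving $S_v$ for $v>u_0$ even at degrees where Matrix \eqref{eq:mat} happens to be invertible again, which is precisely where the paper's forced-element induction is loosest; it also pins down the meaning of ``constructed from $S_v$'' as membership in the graded subalgebra $\mathcal{A}$, which matches the paper's intended recursive notion since the known quantities at each stage are exactly polynomials in the $S_v$'s. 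What the paper's argument buys in exchange is concreteness: it stays at the level of explicit symmetric polynomials and exhibits which basis vectors are unreachable, with no recourse to presentations or kernels. One shared caveat worth noting: the identities $\dim R_u=p(u)$ and $\dim\mathbb{R}[y_1,\dots,y_n]_u=p(u)$ require $u\le n$, so your conclusion, like the paper's, should be read as covering $u_0\le u\le n$.
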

\begin{proof}
    The conclusion for the cases where $u\leq u_0$ follows from Theorem \ref{thm:hom}. We prove the conclusion for the cases $u > u_0$ using a recursive method.
    When $u=u_0+1$, now we aim to construct a basis of $R_{u_0+1}$, which consists of $p(u_0+1)$ elements. First, we select $p(u_0+1)-2$ linearly independent elements $$P_2P_{u_0-1},P_3P_{u_0-2},\dots, P_1^2P_{u_0-1},\dots,P_1^v$$ as the partial basis. At this time, for any $u_i$'s,$v_i$'s satisfying $u_i<u_0,v_i<v_0$ and $$u_1+\cdots +u_r+v_1+\cdots +v_s=u_0+1,$$ the element $P_{p_1}\cdots P_{p_r}S_{v_1}\cdots S_{v_s}$, where $p_i$ is a partition of $u_i$, is linearly dependent with $P_2P_{u_0-1},P_3P_{u_0-2},\dots, P_1^2P_{u_0-1},\dots,P_1^v$, since every $P_{p_i}$ or $S_{v_i}$ can be represented by a linear combination of $P_1P_{u_i-1},\dots, P_1^{u_i}$ or $P_1P_{v_i-1},\dots, P_1^{v_i}$.
    To find enough linearly dependent items, we must include $S_{u_0+1}$ and $P_1S_{u_0}$. However, $P_1S_{u_0}$ is still linearly dependent with $P_2P_{u_0-1},P_3P_{u_0-2},\dots, P_1^2P_{u_0-1},\dots,P_1^v$. Hence, we cannot construct a basis of $R_{u_0+1}$ from $S_v,v\leq u_0+1$.

   Assume the conclusion holds for $u_0\leq v< w$, consider $w$. A basis of $R_{w}$ consists of $p(w)$ elements. First, we select all elements of the form $P_{1}^{e_1}P_{2}^{e_2}\cdots P_{u_0-1}^{e_{u_0-1}},$ where ${e_1}+2e_2+\cdots +(u_0-1){e_{u_0-1}}=w, e_i \in \mathbb{Z}_{\geq 0}.$  This corresponds to the partition consisting of $e_1$ parts of $1$, $e_2$ parts of $2$, $\dots,$ and $e_{u_0-1}$ parts of $u_0-1$. Similar to the discussion of the case $u=u_0+1$, any $P_{p_1}\cdots P_{p_r}S_{v_1}\cdots S_{v_s},$ where $p_i$ is a partition of $u_i$, $u_i<u_0, v_i<u_0$ and $u_1+\cdots +u_r+v_1+\cdots +v_s=v$, is linearly dependent with $P_{1}^{e_1}P_{2}^{e_2}\cdots P_{u_0-1}^{e_{u_0-1}}$'s. Then, $S_{u_0},S_{u_0+1},\dots,S_{w}$ must be used to construct the basis. Specially, to offer enough items, let $w=gu_0+ t, 0\leq t< u_0 $, the element $P_{t}S_{u_0}^g$, which corresponds to the partition consisting of one part of $t$, $g$ parts of $u_0$,  must be included to construct the basis. However, it is linearly dependent with $P_{1}^{e_1}P_{2}^{e_2}\cdots P_{u_0-1}^{e_{u_0-1}}$'s. Thus, a basis of $R_{w}$ cannot be constructed from  $S_v,v\leq u$.
\end{proof}

 Hence, by Theorem \ref{thm:homl}, we cannot get enough linearly independent items and then cannot determine any $P_{p}$, where $p$ is a partition of $v$ for any $v\geq u_0$, uniquely. This means the answer to Question \ref{ques1} is \textbf{No} with high probability!

The above discussion is based solely on the information of the set $X_{n,k}$. In actual scenarios, there might be supplementary data at hand, such as the range of elements. Since the rank of Matrix \eqref{eq:mat} is at least $n-1$, when Matrix \eqref{eq:mat} is not invertible, the possible value of $P_{p}$ is in the form of $\alpha+k_0\beta, k_0\in \mathbb{R}$, where $\alpha$  and $\beta$ can be determined, hence we may get the value of $P_{p}$ with its range known.


\section{Conclusion and future work} \label{sec:fur}
\noindent\textbf{Conclusion}
In this paper, we present two deterministic algorithms (Algorithm \ref{alg:bru} and Algorithm \ref{alg:vien}) for solving the $(n,k)$-complete HSSP. A comparison of the algorithms for solving HSSP is provided as follows:

\begin{table}[htbp]
\centering
\footnotesize
\caption{Comparison of algorithms for solving HSSP}
\label{tab:algorithm_comparison}
\begin{tabular}{|c|c|c|c|}
\hline
\textbf{Algorithms} & \textbf{\makecell{(best) Time \\ complexity}} & \textbf{\makecell{Deter-\\ministic?}} & \textbf{Requirements}  \\ \hline
\makecell{Original brute-force\\ search }& $O(\binom{\binom{n}{k}}{n}(n^3+\binom{n}{k}-n))$ & yes &  $(n,k)$-complete HSSP   \\ \hline
Our algorithm \ref{alg:bru} & \tiny{ $O\left(\binom{n}{k}\left(\log \binom{n}{k}+\prod_{i=1}^{k-1}\left(\binom{n-k+i}{i}-i\right)\right)\right)$} & yes & \makecell{$(n,k)$-complete HSSP \\with ordering relationship}  \\ \hline
 \makecell{Greedy heuristic \\ algorithms \cite{collins2007nonnegative, lozano2016genetic} } & $\backslash$ & no &  \makecell{HSSP whose elements\\ are positive integers}\\ \hline

\makecell{NS algorithms and\\ variations\cite{nguyen1999hardness,coron2020polynomial, coron2021provably,gini2022hardness}} & $O(n^9)$ & no  & \makecell{HSSP over $\mathbb{Z}_M$  satisfying\\ some lattice properties}\\ \hline

Our algorithm \ref{alg:vien} & $O\left(\sum_{u=1}^n p(u,\leq k) ^3+\binom{n}{k}n\right)$ & yes  & \makecell{$(n,k)$-complete HSSP\\ with unique solution} \\ \hline
\end{tabular}
\end{table}

For the $(n,k)$-complete HSSP, our algorithm \ref{alg:bru} demonstrates an advantage for small values of $k$, while NS algorithms and their variations perform well for $n=2k$. In other scenarios, our algorithm \ref{alg:vien} may provide competitive performance.

\vspace{1em}
\noindent\textbf{Future work}
There are some future directions we are interested in regarding HSSP: 1) Singular Pair Characterization: A deeper analysis of parameter pairs $(n,k)$ that admit multiple solutions, including the development of criteria to classify such pairs beyond the current necessary conditions.    
2) Applications in AI Privacy:  Optimizing these algorithms to analyze privacy risks in federated learning scenarios, where HSSP-based attacks reconstruct private data from aggregated updates.

\section*{Acknowledgements}
We would like to express our sincere gratitude to those who have supported and contributed to this paper. Special thanks go to Prof. Song Dai for his insightful discussions during the initial stages, which helped us to identify the right entry point for this paper. We are also deeply grateful to Dr. Agnese Gini for her valuable discussions and feedback throughout the process, which greatly improved the paper.
\bibliographystyle{unsrt} 
\bibliography{hsspbib}

\end{document}